\newcommand{\Z}{\mathbf{Z}}
\newcommand{\R}{\mathbf{R}}
\newcommand{\be}{\begin{equation}}
\newcommand{\ee}{\end{equation}}
\newcommand{\lip}{\text{\rm Lip}_\sigma }
\renewcommand{\P}{\mathrm{P}}
\newcommand{\E}{\mathrm{E}}
\newcommand{\F}{\mathcal{F}}
\newcommand{\1}{\boldsymbol{1}}
\renewcommand{\d}{{\rm d}}
\newcommand{\e}{{\rm e}}
\renewcommand{\leq}{\leqslant}
\renewcommand{\ge}{\geqslant}
\renewcommand{\le}{\leqslant}
\author{Daniel Conus\\Lehigh University
\and Mathew Joseph\\University of Utah
\and Davar Khoshnevisan\\University of Utah}
\title{Correlation-length bounds, and 
	estimates for intermittent islands in parabolic SPDEs\thanks{
	Research supported in part by 
	the NSFs grant DMS-0747758 (M.J.) and DMS-1006903 (D.K.).}}
\date{October 13, 2011}
\newtheorem{stat}{Statement}[section]
\newtheorem{corollary}[stat]{Corollary}
\newtheorem{theorem}[stat]{Theorem}
\newtheorem{lemma}[stat]{Lemma}
\theoremstyle{definition} 
\newtheorem{remark}[stat]{Remark}
\numberwithin{equation}{section}
\begin{document}
\maketitle
\begin{abstract}

We consider the nonlinear stochastic heat equation in one dimension. 
Under some conditions on the nonlinearity, we show that the ``peaks" 
of the solution are rare, almost fractal like. 
We also provide an upper bound on the length of the ``islands,''
the regions of large values. These results are obtained by analyzing the 
\textit{correlation length} of the solution. \\

	\noindent{\it Keywords:} The stochastic heat equation, intermittency, islands, peaks \\

	\noindent{\it \noindent AMS 2000 subject classification:}
	Primary 60H15; Secondary 35R60.
\end{abstract}

\section{Introduction}
Let $\dot{W}:=\{\dot{W}_t(x)\}_{t>0,x\in\R}$
denote space-time white noise, and
consider the nonlinear stochastic heat equation,
\begin{equation}\label{heat}
	\frac{\partial}{\partial t}u_t(x) = \frac12 \frac{\partial^2}{%
	\partial x^2}u_t(x) +
	\sigma(u_t(x)) \dot W_t(x),
\end{equation}
for $(t\,,x)\in(0\,,\infty)\times\R$,
subject to $u_0(x):=1$ for all $x\in\R$. Throughout we consider only
the case that $\sigma:\R\to\R$ is Lipschitz continuous. In that case,
the theory of Walsh \cite{Walsh} explains the
meaning of \eqref{heat} and shows that \eqref{heat} has a unique 
[strong] solution that is continuous for all $(t\,,x)\in[0\,,\infty)\times\R$.
The goal of this article is to make some observations about the geometric
structure of the random function $x\mapsto u_t(x)$ for $t>0$ fixed. Notice that a consequence of the fact that $u_0(x)$ is constant is that the law of $u_t(x)$ doesn't depend on $x$ (\cite{Dalang:99}). 

Although we will have some results that are valid for \eqref{heat}
in general, we are mainly motivated by the following two special cases 
of Eq.\ \eqref{heat}:
\begin{description}
	\item[Case 1.] There exists $q>0$ such that $\sigma(z)=qz$ for all $z\in\R$.
		In this case, \eqref{heat} is known as the \emph{parabolic Anderson model};
	\item[Case 2.] $0<\inf_{z\in\R}\sigma(z)\le\sup_{z\in\R}\sigma(z)<\infty$.
		An important special case of this case occurs when $\sigma$ is a constant;
		then \eqref{heat} is the linear SPDE whose solution is a 
		stationary \emph{Gaussian} process.
\end{description}
Let $\log_+(x):=\log(x\vee \e)$ and
define, for all $R,\alpha>0$,
\begin{equation}
	g_\alpha(R) := \begin{cases}
		\exp\left( \alpha(\log_+ R)^{\nicefrac23} \right)&\text{in Case 1},\\
		\alpha(\log_+ R)^{\nicefrac12}&\text{in Case 2}.
	\end{cases}
\end{equation}
[``$g$'' stands for ``gauge.'']
Our recent effort \cite{CJK} implies that, for both Cases 1 and
 2, for all $t>0$ fixed there exist
$\alpha_*,\alpha^*>0$ such that $\limsup_{R\to\infty}
u_t(R)/g_\alpha(R)=0$ a.s.\ if $\alpha>\alpha^*$,
and $\limsup_{R\to\infty} u_t(R)/g_\alpha(R)=\infty$
if $\alpha\in(0\,,\alpha_*)$. In other words, the ``exceedence set''
\begin{equation}
	E_\alpha(R) := \left\{ x\in[0\,,R]:\
	u_t(x) \ge g_\alpha(R) \right\}
\end{equation}
is a.s.\ empty for all $R\gg 1$ if $\alpha>\alpha^*$; and
$E_\alpha(R)$ is unbounded for all $R>1$ if
$\alpha\in(0\,,\alpha_*)$. 

Note that the rescaled version $R^{-1}E_\alpha(R)$ of
$E_\alpha(R)$ is a random subset of $[0\,,1]$. One of our original aims 
was to show that $R^{-1}E_\alpha(R)$ ``converges'' to a 
random fractal of Hausdorff dimension $d(\alpha)\in(0\,,1)$
as $R\to\infty$ when $\alpha$ is sufficiently small. 
So far we have not been able to do this, though as we will soon see
we are able to furnish strong evidence in favor of this claim.

If $R^{-1}E_\alpha(R)$ {\it did} look like a 
random fractal subset of $[0\,,1]$ with Hausdorff dimension
$d(\alpha)\in(0\,,1)$, then we would expect its Lebesgue measure to behave as
$R^{-d(\alpha)+o(1)}$ as $R\to\infty$. 
Or stated in more precise terms, we would expect that
if $\alpha$ is sufficiently small, then 
\begin{equation}
	\lim_{R\to\infty} \frac{\log|E_\alpha(R)|}{\log R}= 1-d(\alpha)
	\qquad\text{a.s.}
\end{equation}
The first theorem of this paper comes  close to proving this last assertion.

\begin{theorem}\label{th:sojourn}
	If either Case 1 or Case 2 holds,
	then there exists $\alpha_0>0$ such that for all
	$\alpha\in(0\,,\alpha_0)$ there exists $\gamma\in(0\,,1)$
	such that
	\begin{equation}
		0<\liminf_{R\to\infty}
		\frac{\log \left|E_\alpha(R)\right|}{\log R}\le
		\limsup_{R\to\infty} 
		\frac{\log \left|E_\alpha(R)\right|}{\log R}<1\qquad\text{a.s.}
	\end{equation}
\end{theorem}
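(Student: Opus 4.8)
The plan is to establish the two bounds separately, using moment estimates for $u_t(x)$ together with the correlation-length analysis that the paper develops (and which I may assume, since the theorem is advertised as a consequence of ``analyzing the correlation length'').

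\smallskip

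\noindent\textbf{Upper bound.}
First I would bound $\E|E_\alpha(R)|$ from above. By Tonelli's theorem and stationarity in $x$,
\begin{equation}
	\E|E_\alpha(R)| = \int_0^R \P\{u_t(x)\ge g_\alpha(R)\}\,\d x
	= R\cdot\P\{u_t(0)\ge g_\alpha(R)\}.
\end{equation}
The one-point tail of $u_t(0)$ is controlled by the known moment growth: in Case 1 one has $\E(|u_t(0)|^k)\le C\exp(Ck^3)$ (the cubic-in-$k$ Anderson growth), and in Case 2 one has Gaussian-type tails $\E(|u_t(0)|^k)\le C^k k^{k/2}$. Optimizing the Chebyshev bound $\P\{u_t(0)\ge\lambda\}\le\lambda^{-k}\E(|u_t(0)|^k)$ over $k$ gives, for each fixed $t$, a stretched-exponential tail of the form $\P\{u_t(0)\ge\lambda\}\le\exp(-c(\log\lambda)^{3/2})$ in Case 1 and $\le\exp(-c\lambda^2)$ in Case 2. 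Plugging in $\lambda=g_\alpha(R)$, the gauge $g_\alpha$ is chosen precisely so that this probability is $R^{-\rho(\alpha)}$ with $\rho(\alpha)\to\infty$ as $\alpha\to\infty$ and, more to the point, $\rho(\alpha)>1$ for all $\alpha$ in some interval $(0,\alpha_0)$; equivalently $\E|E_\alpha(R)|\le R^{1-\rho(\alpha)}$ with $1-\rho(\alpha)<1$. A first-moment (Markov) argument along a geometric subsequence $R_n=2^n$, combined with a Borel--Cantelli step and the monotonicity/continuity of $R\mapsto|E_\alpha(R)|$ to fill in between the $R_n$'s, then yields $\limsup_{R\to\infty}\log|E_\alpha(R)|/\log R\le 1-\rho(\alpha)+o(1)<1$ a.s., after possibly shrinking $\alpha_0$.

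\smallskip

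\noindent\textbf{Lower bound.}
For the lower bound I would show $|E_\alpha(R)|$ is not too small by a second-moment / Paley--Zygmund argument, and this is where the correlation-length input is essential. Write $|E_\alpha(R)|=\int_0^R\1\{u_t(x)\ge g_\alpha(R)\}\,\d x$; then $\E|E_\alpha(R)|=Rp_R$ with $p_R:=\P\{u_t(0)\ge g_\alpha(R)\}\ge R^{-\rho'(\alpha)}$ by the matching lower tail bound (available in both cases; in Case 2 from Gaussian-type lower bounds, in Case 1 from the small-ball/large-deviation estimates for $u_t(0)$), where again $\rho'(\alpha)<1$ on a suitable interval. For the second moment,
\begin{equation}
	\E\left(|E_\alpha(R)|^2\right)=\int_0^R\!\!\int_0^R
	\P\{u_t(x)\ge g_\alpha(R),\,u_t(y)\ge g_\alpha(R)\}\,\d x\,\d y.
\end{equation}
The point is that $u_t(x)$ and $u_t(y)$ decorrelate once $|x-y|$ exceeds the correlation length $\ell(R)$ of the field at level $g_\alpha(R)$ — this is exactly the quantitative statement proved elsewhere in the paper — so the integrand is $\le C p_R^2$ when $|x-y|>\ell(R)$ and trivially $\le p_R$ otherwise; hence $\E(|E_\alpha(R)|^2)\le CR^2p_R^2 + R\,\ell(R)\,p_R$. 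As long as $\ell(R)=R^{o(1)}$ (correlation length subpolynomial in $R$, which is the content of the correlation-length bounds), the first term dominates and $\E(|E_\alpha(R)|^2)\le C(\E|E_\alpha(R)|)^2$. Paley--Zygmund then gives $\P\{|E_\alpha(R)|\ge\tfrac12\E|E_\alpha(R)|\}\ge c>0$, i.e.\ with positive probability $|E_\alpha(R)|\ge\tfrac12 R^{1-\rho'(\alpha)}$. Upgrading this ``positive probability'' to an almost-sure liminf statement is the main obstacle: I would handle it by a blocking/independence argument, splitting $[0,R]$ into $\asymp R/\ell(R)$ well-separated subintervals on which the events $\{|E_\alpha(\cdot)|\text{ large}\}$ are approximately independent (again using finite correlation length), applying Borel--Cantelli along $R_n=2^n$ to conclude that $|E_\alpha(R_n)|\ge R_n^{1-\rho'(\alpha)-o(1)}$ eventually a.s., and then interpolating to all $R$ by monotonicity. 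This yields $\liminf_{R\to\infty}\log|E_\alpha(R)|/\log R\ge 1-\rho'(\alpha)-o(1)>0$.

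\smallskip

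\noindent\textbf{Main difficulty.}
The routine part is the moment estimates and the gauge bookkeeping that turns stretched-exponential tails into powers of $R$; the delicate part is quantifying ``decorrelation beyond the correlation length'' strongly enough — uniformly at the high level $g_\alpha(R)$ — to make both the second-moment bound and the almost-sure upgrade go through, and it is precisely for this reason that the correlation-length estimates are isolated as the technical heart of the paper. Choosing $\alpha_0$ small enough that $\rho(\alpha),\rho'(\alpha)\in(0,1)$ simultaneously, and checking that Case 1's heavier (cubic-exponential) moments still give a correlation length that is only $R^{o(1)}$, are the two places where the constants must be tracked with care.
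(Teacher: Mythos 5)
Your proposal shares the basic architecture of the paper's proof---tail estimates for $u_t$, the correlation-length coupling with a finite-lag process, a block decomposition of $[0,R]$ into $\asymp R/\ell$ pieces, a concentration inequality for the sum of block contributions, Borel--Cantelli along a geometric subsequence, then a monotonicity/interpolation step---but the organization and the moment tools differ, and a couple of spots need tightening.

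For the upper bound, your first-moment Markov argument on $u_t$ directly (without the coupling) is more elementary than what the paper does, and it is fine. For the lower bound, the paper does not use Paley--Zygmund at all: it couples $u_t$ with a lag-$\ell$ process $Y$ (Theorem \ref{th:main}), writes the sojourn time of $Y$ as $\sum_j Z_j$ with $Z_j$ the contribution from the $j$-th block of length $\ell$, observes that the even and odd subsums are sums of independent variables, and applies Burkholder's martingale inequality (Lemma \ref{lem:Y}) to get an $L^k$ bound on $\mathfrak{Y}_\alpha/\E\mathfrak{Y}_\alpha - 1$. That single $L^k$ concentration then yields \emph{both} the upper and lower bounds via Borel--Cantelli, with the constants in \eqref{1} absorbing the $\pm\delta$ fidelity of the coupling. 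Two issues with your route: (i) The statement ``$\P\{u_t(x)\ge\lambda,\,u_t(y)\ge\lambda\}\le Cp_R^2$ for $|x-y|>\ell$'' is not what the correlation-length theorem gives directly; one must pass through the coupled process $Y$, and this introduces an additive error $\approx R^{-m}$ (from the failure event of the coupling) plus a level shift of $\pm\delta$, both of which must be verified to be negligible compared to $p_R^2$ and to the spacing between $g_\alpha(R)\pm\delta$. The paper handles precisely this bookkeeping in \eqref{u-Y}--\eqref{3}. (ii) The Paley--Zygmund step is a detour: it only yields a fixed positive probability and so cannot feed into Borel--Cantelli. Your own ``blocking/independence'' cure, once written out, amounts to applying Chebyshev (or a higher moment bound) to the even/odd block sums $\sum Z_{2j}$, $\sum Z_{2j+1}$---at which point Paley--Zygmund is unnecessary and you have essentially re-derived the paper's Lemma \ref{lem:Y} with $k=2$ instead of arbitrary $k$. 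This does work (the Chebyshev tail $\ell/(Rp_R)$ is summable along $R_n=2^n$ when $\rho'(\alpha)<1$), so your plan is salvageable, but the cleaner and more robust route is the paper's: go straight to the block decomposition of the coupled process and apply a moment inequality to the martingale increments, getting both $\liminf>0$ and $\limsup<1$ from the same concentration estimate.
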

The results of \cite{CJK} imply that 
$E_\alpha(R)$ is eventually empty a.s.\ when
$\alpha>\alpha^*$. Therefore, $\alpha_0$ cannot be made to be arbitrarily
large.

Choose and fix a time $t>0$.
Given  two numbers $0<a<b$, we say that a closed interval
$I\subset\R_+$ is an \emph{$(a\,,b)$-island} [at time $t$] if:
\begin{enumerate}
	\item $u_t(\inf I)=u_t(\sup I)=a$;
	\item $u_t(x)>a$ for all $x\in\text{int}(I)$; and
	\item $\sup_{x\in I} u_t(x)>b$.
\end{enumerate}
Define
\begin{equation}
	J_t(a\,,b\,;R) := \text{the length of the largest
	$(a\,,b)$-island $I\subset[0\,,R]$}.
\end{equation}
The following result shows that the relative length of the
largest ``tall island'' in $[0\,,R]$---also known as ``intermittency islands''---is vanishingly small
as $R\to\infty$. This phenomenon has been predicted
[particularly for Case 1] and is a part of a  property
that is referred to somewhat loosely as ``physical intermittency''
\cite{CJK,KPZ,KZ}.

\begin{theorem}\label{th:islands}
	Assume that $\sigma(1)\ne 0$. Then for every $t>0$ and all  $(a\,,b)$ such that
	$1<a<b$ and $\P\{u_t(0)>b\}>0$,
	\begin{equation}
		\limsup_{R\to\infty}\frac{
		J_t(a\,,b\,;R)}{\left| \log R \right|^2}<\infty
		\quad\text{a.s.}
	\end{equation}
	If Case 2 occurs, then the preceding can be improved to
	the following:
	\begin{equation}
		\limsup_{R\to\infty}\frac{J_t(a\,,b\,;R)}{
		\log R\cdot\left|\log \log R\right|^{\nicefrac32}}<\infty
		\quad\text{a.s.}
	\end{equation}
\end{theorem}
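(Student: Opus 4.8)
The plan is to control the length of a tall island by two independent mechanisms. An $(a\,,b)$-island $I\subset[0\,,R]$ must (i) contain a point $x$ with $u_t(x)>b$, and (ii) have the property that $u_t\ge a>1$ throughout the \emph{entire} interval $I$, since $u_t$ stays strictly above $a$ on the interior and equals $a$ at the endpoints. The first property is controlled by the exceedence-set bounds: since $b>1$, by the results of \cite{CJK} (in the regime described in the excerpt) the set $\{x\in[0\,,R]:u_t(x)>b\}$ is nonempty for large $R$, but if we take a gauge slightly above the critical one, the set $\{x\in[0\,,R]:u_t(x)\ge g_\alpha(R)\}$ is eventually empty. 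In particular, for $R$ large the island cannot have its peak at a value comparable to $g_\alpha(R)$, but this alone does not bound its length. The real constraint is (ii): a long island forces $u_t$ to remain bounded \emph{below} by $a>1$ over a long stretch, and this is a rare event that we quantify via the \emph{correlation-length} analysis advertised in the abstract.

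The key steps, in order, are as follows. First, I would establish a quantitative estimate of the form
\begin{equation}
	\P\left\{ \inf_{x\in[0\,,L]} u_t(x) \ge a \right\} \le \exp\bigl(-c\, \psi(L)\bigr)
\end{equation}
for some explicit rate $\psi$, valid for $a>1$ and all large $L$, where $c=c(t\,,a)>0$. The idea is that $x\mapsto u_t(x)$ has a finite correlation length: one can find a spacing $\Delta=\Delta(t)$ such that the values $u_t(0), u_t(\Delta), u_t(2\Delta),\dots$ are "almost independent," and each has probability strictly less than one of exceeding $a$ (here is where $a>1$ and the fact that $u_0\equiv1$, so $\E[u_t(x)]=1<a$ after using $\sigma(1)\ne0$ to guarantee genuine fluctuations, enter). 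Chaining roughly $L/\Delta$ of these near-independent events gives an exponentially small bound in $L/\Delta$ in the general case. In Case 2 the solution has much better (sub-Gaussian, bounded-diffusion) tail behavior and stronger decorrelation, so one gets a sharper super-polynomial rate; tracking the constants carefully in the decorrelation lemma is what upgrades $|\log R|^2$ to $\log R\cdot|\log\log R|^{3/2}$.

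Next I would run a union bound over dyadic scales. Fix $R$; if there is an $(a\,,b)$-island $I\subset[0\,,R]$ of length $J_t(a\,,b\,;R)\ge L$, then there is an interval of length $L$ (we may discretize its left endpoint to a grid of mesh $1$, losing only a constant) on which $u_t\ge a$. Hence
\begin{equation}
	\P\left\{ J_t(a\,,b\,;R)\ge L \right\} \le (R+1)\, \exp\bigl(-c\,\psi(L)\bigr).
\end{equation}
Choosing $L=L(R)$ so that $c\,\psi(L)\ge 2\log R$ makes the right-hand side summable along the sequence $R=2^n$; Borel--Cantelli then gives $\limsup_n J_t(a\,,b\,;2^n)/L(2^n)<\infty$ a.s. A monotonicity argument in $R$ (the island length is nondecreasing in $R$, and $L(2^{n+1})/L(2^n)\to1$) fills the gaps between consecutive dyadic values. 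In the general case $\psi(L)=L$ up to constants, so $L(R)\asymp\log R$, which already gives the stated $|\log R|^2$ bound with room to spare; in Case 2 the improved $\psi$ yields $L(R)\asymp \log R/|\log\log R|^{3/2}$... wait, more precisely one solves $\psi(L)\asymp\log R$ with $\psi$ the Case-2 rate to land exactly on $\log R\cdot|\log\log R|^{3/2}$.

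The main obstacle is the decorrelation estimate behind the first displayed inequality: making precise the sense in which $u_t(x)$ and $u_t(y)$ are nearly independent when $|x-y|$ exceeds a fixed correlation length, \emph{uniformly} in a way that survives chaining over order $L$ sites. For the nonlinear equation one cannot appeal to Gaussianity; instead I would localize the white noise—replacing $u_t(x)$ by a solution $u_t^{(x)}$ driven only by noise in a space-time neighborhood of $(t\,,x)$—and bound the $L^p$ (or exponential-moment) error by the heat-kernel mass escaping that neighborhood, which is Gaussian-small in the spatial radius. This is exactly the type of estimate whose proof is the technical heart of the paper; the separate treatment of Case 1 (where moments grow and only stretched-exponential control is available, forcing the weaker $|\log R|^2$ rate) versus Case 2 (bounded $\sigma$, hence uniform moment bounds and a cleaner comparison) is where the two conclusions diverge.
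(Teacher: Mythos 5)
Your strategy is the right one and is essentially the paper's: reduce a long $(a,b)$-island to a long stretch on which $u_t\ge a$, decorrelate via the correlation-length coupling of Theorem \ref{th:main}, bound the probability of a long bad stretch by an exponential in the number of nearly independent samples, union-bound over positions, and finish with Borel--Cantelli and monotonicity in $R$. The paper packages the chaining as an Erd\H{o}s--R\'enyi longest-run argument on the grid $x_j=cj\log R$, with the coupling error at each grid point absorbed by a polynomial-in-$R^{-1}$ bound, but that is cosmetically different from your direct union bound over candidate intervals.

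There is, however, a concrete gap in your general-case accounting. You posit a \emph{fixed} spacing $\Delta=\Delta(t)$ at which $u_t(0),u_t(\Delta),u_t(2\Delta),\dots$ are ``almost independent'' and then claim $\psi(L)\asymp L$, hence $L(R)\asymp\log R$. But Theorem \ref{th:main} does not give a fixed $\Delta$ (this is exactly the Open Problem about exponential mixing); it gives a correlation length $\ell$ that \emph{grows} with the fidelity: to get coupling error $\le R^{-m}$ one needs $\ell\asymp\log R$ in Case 1. With a fixed $\Delta$, the coupling error $\epsilon(\Delta)>0$ is a fixed positive constant, and the union bound over $\asymp R$ positions produces a factor $R\,\epsilon(\Delta)$ that does not vanish, so Borel--Cantelli cannot be applied. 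Once you take $\ell\asymp\log R$, the number of effectively independent grid points in $[0,L]$ is $L/\log R$, so $\psi(L)\asymp L/\log R$, and solving $\psi(L)\asymp\log R$ gives $L(R)\asymp(\log R)^2$ --- precisely the theorem's rate, with no ``room to spare.'' Your Case 2 computation already implicitly carries this $R$-dependence through the correlation length $\ell\asymp(\log\log R)^{3/2}$ and lands on $\log R\,(\log\log R)^{3/2}$, which is consistent; the same reasoning needs to be applied in Case 1. You should also supply, as the paper does, the short argument from $\sigma(1)\ne0$ that some $b>1$ has $\P\{u_t(0)>b\}>0$ (otherwise the hypotheses could be vacuous), and note that the peak condition $\sup_I u_t>b$ plays no role in bounding the island's length --- only the ``$u_t\ge a$ throughout $I$'' constraint matters, as you correctly identified.
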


Let us make a few remarks before we continue our introduction.
\begin{remark}\begin{enumerate}
	\item During the course of the proof of this theorem,
		we will establish that exists $b>1$
		such that $\P\{u_t(0)>b\}>0$; therefore, the result always
		has content.
	\item The condition $\sigma(1)\neq 0$ is necessary. Indeed, if
		$\sigma(1)$ were zero, then $u_t(x)=1$ for all $t>0$ and
		$x\in\R$ [this is because $u_0\equiv 1$].\qed
\end{enumerate}\end{remark}

Theorems \ref{th:sojourn} and \ref{th:islands} both rely on 
a fairly good estimation of ``correlation length'' for the random
field $x\mapsto u_t(x)$. There are many ways one can understand
the loose term, ``correlation length.'' Let us describe one next.

Let $\{X_x\}_{x\in\R}$ be a random field on $(\Omega\,,\F,\P)$,
and let $\mathcal{L}(\ell)$ denote the collection of all weakly
stationary random fields
$\{Y_x\}_{x\in\R}$ on $(\Omega\,,\F,\P)$ such that
$Y$ has ``lag'' $\ell$; that is, $Y_z$ is independent of
$(Y_{x_i})_{i=1}^N$ for all $z,x_1,\ldots,x_N\in\R$ that
satisfy $\min_{1\le j\le N}|z-x_j|\ge\ell$.
Then, the \emph{correlation length} of $X$ is the function
\begin{equation}\label{def:CL}
	L_X (\epsilon\,;\delta) := \inf\left\{
	\ell>0 :\ \inf_{Y\in\mathcal{L}(\ell)} \sup_{x\in\R}
	\P\{|X_x-Y_x| > \delta\} < \epsilon \right\},
\end{equation}
where $\epsilon,\delta>0$ can be thought of as \emph{fidelity} parameters.
Informally speaking, when we find $L_X(\epsilon\,;\delta)$,
we seek to find the smallest lag-length
$\ell$ for which there exists a coupling of $X$ with a 
lag-$\ell$ process $Y$, such that the coupling is good to within
$\delta$ units with probability at least $1-\epsilon$.

The following is the main technical result of this paper.
It states that the correlation length of the solution to \eqref{heat} is logarithmic
in the fidelity parameter $\epsilon$; and the fidelity parameter $\delta$
can be as small as $\exp(-K\left|\log\epsilon\right|^{%
\nicefrac23})$ for a universal $K=K(t)$.

\begin{theorem}\label{th:main}
	For every $t>0$, there exists a positive
	and finite constant $K:=K(t)$, such that as $\epsilon\downarrow 0$,
	\begin{equation}\label{eq:main1}
		L_{u_t}\left( \epsilon\,; \e^{-K\left|
		\log\epsilon\right|^{\nicefrac23}}
		\right) = O\left( \left| \log\epsilon\right| \right).
	\end{equation}
	If $\sigma$ is a bounded function, then in fact there exists
	$\theta\in(0\,,1)$ such that
	\begin{equation}\label{eq:main2}
		L_{u_t} \left(
		\epsilon\,; \left[\frac{\log\left|\log\epsilon\right|}{%
		\left|\log\epsilon\right|}
		\right]^\theta \right) = O\left( \left[\log\left|\log\epsilon\right|
		\right]^{\nicefrac32}\right)\qquad(\epsilon\downarrow 0).
	\end{equation}
\end{theorem}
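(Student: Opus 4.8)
The plan is to realize the comparison field $Y$ by \emph{localizing the white noise in space}. Fix $t>0$ and a scale $\ell>0$, to be optimized. For each $x\in\R$, let $v^{(x)}$ solve the mild equation obtained from \eqref{heat} by replacing $\dot W$ by its restriction to $\{(s,w):|w-x|\le\ell\}$, and put $Y_x:=v^{(x)}_t(x)$. Because $u_0\equiv 1$ and $\dot W$ is homogeneous in space, the field $\{Y_x\}_{x\in\R}$ is strictly stationary, hence weakly stationary once $\sup_x\E(Y_x^2)<\infty$ (which follows from the standard moment bounds for \eqref{heat}); moreover $Y_x$ is a functional of $\dot W$ on $\{|w-x|\le\ell\}\times(0,t]$, so $Y_z$ is independent of $(Y_{x_1},\dots,Y_{x_N})$ whenever $\min_j|z-x_j|\ge 2\ell$, i.e.\ $Y\in\mathcal L(2\ell)$. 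Hence $L_{u_t}(\epsilon;\delta)\le 2\ell$ as soon as $\sup_x\P\{|u_t(x)-Y_x|>\delta\}<\epsilon$, and by stationarity it suffices to bound $\P\{|u_t(0)-Y_0|>\delta\}$. Markov's inequality then reduces everything to a $k$-th moment estimate for the coupling error, with the integer $k$ kept as a free parameter until the end.

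For that estimate I would work from Walsh's mild formulation. With $p$ the heat kernel and $D_r(z):=u_r(z)-v^{(0)}_r(z)$, one has for $|z|\le\ell$,
\begin{equation*}
	D_r(z)=\int_0^r\!\!\int_{|w|>\ell}\! p_{r-s}(z-w)\,\sigma(u_s(w))\,W(\d s\,\d w)
	+\int_0^r\!\!\int_{|w|\le\ell}\! p_{r-s}(z-w)\,[\sigma(u_s(w))-\sigma(v^{(0)}_s(w))]\,W(\d s\,\d w).
\end{equation*}
The first (``cut'') term involves only the excised noise; by the Burkholder--Davis--Gundy and Minkowski inequalities, the Gaussian tail bound $\int_{|w|>a}p_r(w)^2\,\d w\le Ca^{-1}\e^{-a^2/r}$, and the a priori moment bounds $\E(|u_s(w)|^k)^{2/k}\le C\e^{ck^2s}$ (valid for any Lipschitz $\sigma$, and replaced by the uniform bound $\E(|\sigma(u_s(w))|^k)^{2/k}\le\|\sigma\|_\infty^2$ when $\sigma$ is bounded), the cut term, evaluated at the centre, is of size $\e^{-c\ell^2/t}$ up to $k$- and $t$-dependent factors. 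The second (``propagated'') term is Lipschitz in $D$ and will be absorbed by a fractional Gronwall argument for the kernel $(r-s)^{-1/2}$, whose resolvent grows like $\e^{ck^2t}$.

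The delicate point, and the technical heart, is that $D_r(z)$ is \emph{not} small for $z$ near the window boundary $|z|=\ell$, where noise adjacent to $z$ has been removed; a naive $\sup_{|z|\le\ell}$ therefore breaks the recursion. I would instead run the Gronwall estimate on the weighted quantity
\begin{equation*}
	\Psi_k(r):=\sup_{x_0\in\R}\ \sup_{|z-x_0|\le\ell}\ \e^{\lambda(\ell-|z-x_0|)}\,\E\big(|u_r(z)-v^{(x_0)}_r(z)|^k\big)^{2/k},
\end{equation*}
with an exponential weight of rate $\lambda>0$. One checks that $\int_{|w-x_0|\le\ell}p_r(z-w)^2\,\e^{-\lambda(\ell-|w-x_0|)}\,\d w\le Cr^{-1/2}\e^{\lambda^2 r/4}\,\e^{-\lambda(\ell-|z-x_0|)}$, so the weight costs only the bounded factor $\e^{\lambda^2 t/4}$ in the Gronwall kernel; the recursion then closes and $\Psi_k(t)\le Ck\,\e^{ck^2t}$. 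Evaluating at the centre produces the weight $\e^{-\lambda\ell}$, so $\E(|u_t(0)-Y_0|^k)^{2/k}\le Ck\,\e^{ck^2t}\,\e^{-\lambda\ell}$. When $\sigma$ is bounded the cut term admits a sharper treatment: its quadratic variation is a \emph{deterministic} $O(\e^{-\ell^2/t})$, so it is genuinely sub-Gaussian, and a parallel exponential-martingale control of the propagated part (whose quadratic variation one bounds through a moderate-order moment) upgrades the decay in $\ell$ to essentially Gaussian.

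It remains to optimize. Raising the general-case bound to the $k/2$ and feeding it into Markov's inequality produces a term $\e^{ck^3t/2}$ (the cube coming from $k^2\cdot k$) which, together with the factor $\e^{-c\lambda k\ell}$, must beat the budget $|\log\epsilon|$ supplied by the probability threshold; balancing $ck^3t$ against $|\log\epsilon|$ forces $k\asymp|\log\epsilon|^{1/3}$, $\ell=O(|\log\epsilon|)$, and lets the fidelity $\delta$ be pushed down to $\e^{-K|\log\epsilon|^{2/3}}$, which is \eqref{eq:main1}. When $\sigma$ is bounded, the near-Gaussian estimates of the previous step remove the $\e^{ck^2t}$ obstruction, so the moment order may be taken much larger; carrying out the resulting optimization, a window of only $O\big([\log|\log\epsilon|]^{3/2}\big)$ suffices to keep the error below $\big[\log|\log\epsilon|/|\log\epsilon|\big]^\theta$ for a suitable $\theta\in(0,1)$, giving \eqref{eq:main2}. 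The steps I expect to demand the most care are the weighted estimate itself (choosing the weight so the boundary contribution is tamed without destroying the smallness in $\ell$), making the a priori moment bounds for \eqref{heat} uniform and explicit in both $k$ and $t$, and, for \eqref{eq:main2}, the sub-Gaussian bookkeeping that allows the moment order to grow with $\epsilon$.
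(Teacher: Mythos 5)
Your proposal takes a genuinely different route from the paper, and it appears sound. The paper localizes the noise using a \emph{moving} spatial window (the process $U^{(\beta)}$, which integrates over $[x-\sqrt{\beta t},\,x+\sqrt{\beta t}]$ with $x$ the output variable), and this forces a second truncation via Picard iterates $U^{(\beta,n)}$, since $U^{(\beta)}$ itself chains windows indefinitely and has infinite lag; the key facts are imported wholesale as Lemmas~\ref{lem:u-Ubeta} and \ref{lem:Ubeta-Ubetal} from \cite{CJK}, and the lag ends up being $2n\sqrt{\beta t}\sim k^3$. You instead anchor a \emph{fixed} window $[x-\ell,\,x+\ell]$ at each base point and set $Y_x:=v^{(x)}_t(x)$, which is directly measurable with respect to the noise on that window, hence $Y\in\mathcal L(2\ell)$ with no Picard truncation. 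The price is that the coupling error $D$ is not uniformly small over the window (it is $O(1)$ near the boundary), which you handle with the exponential weight $\e^{\lambda(\ell-|z-x_0|)}$; the identity $\int_{|w-x_0|\le\ell}p_\tau(z-w)^2\e^{-\lambda(\ell-|w-x_0|)}\,\d w\le C\tau^{-1/2}\e^{\lambda^2\tau/4}\e^{-\lambda(\ell-|z-x_0|)}$ does make the weighted recursion close (via the triangle inequality $|w|-|z|\le|w-z|$ and the Gaussian moment-generating function), and evaluating at the centre pulls out the factor $\e^{-\lambda\ell}$. One consequence worth noting: the paper's lag $O(|\log\epsilon|)$ comes from the Picard step ($n\sqrt{\beta}\sim k^3$), whereas your lag is $2\ell$ directly; carrying the optimization through (with $k\asymp|\log\epsilon|^{1/3}$ and $\delta=\e^{-K|\log\epsilon|^{2/3}}$) one finds $\ell\gtrsim|\log\epsilon|^{2/3}$ already suffices, so your construction could in principle sharpen \eqref{eq:main1} to a lag of order $|\log\epsilon|^{2/3}$. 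What each approach buys: the paper leans on ready-made estimates from \cite{CJK} and is short; yours is self-contained and gives a cleaner picture of \emph{why} the coupling works (a single window, a single Gronwall loop), at the cost of the weighted-norm bookkeeping. The one place your sketch is noticeably thinner than the paper's is the bounded-$\sigma$ case: your sub-Gaussian treatment of the cut term is clean (its quadratic variation is dominated deterministically), but ``a parallel exponential-martingale control of the propagated part'' hides the real difficulty, since that term's quadratic variation is random and involves $D$ itself; this would require a more careful iteration (for instance, interpolating between exponential-moment control of the cut term and a Gronwall loop in a Besov-type norm), roughly parallel to what the paper defers to the unproved Lemmas~\ref{lem:u-Ubeta:sigma:bdd} and \ref{lem:Ubeta-Ubetal:sigma:bdd}. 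With those details filled in, both parts of the theorem would follow from your construction.
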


Our notion of correlation length is stronger than other, somewhat simpler,
notions of this general type. 
For instance, consider the following: Let $\{X_x\}_{x\in\R}$ be
a random field, and define $L^*_X(\epsilon\,;\delta)$ to be the smallest
$\ell>0$ for which we can find---on \emph{some} probability space---a
coupling $(X^*\,,Y^*)$, where $X^*$ has the same law as $X$ and
$Y^*$ has lag $\ell$, and
$\sup_{x\in\R}\P\{|X_x^*-Y^*_x|>\delta\}<\epsilon$.
Since $L^*_X(\epsilon\,;\delta)\le
L_X(\epsilon\,;\delta)$, Theorem \ref{th:main} readily implies
that
\begin{equation}
	L^*_{u_t} \left(
	\epsilon\,; \e^{-K\left|
	\log\epsilon\right|^{\nicefrac23}} \right)
	= O(\left|\log\epsilon\right|)
	\hskip1in (\epsilon\downarrow 0).
\end{equation}

\noindent\textbf{Open Problem.} Is it true that
$L^*_{u_t}(\epsilon\,;0)=O(\left|\log\epsilon\right|)$?
This is equivalent to asking whether or
not $x\mapsto u_t(x)$ is exponentially mixing.\\

Although we do not know how to prove that $x\mapsto u_t(x)$
is exponentially mixing, we are able to prove that the coupling
in Theorem \ref{th:main} is ``good on all scales.'' In order to interpret this,
note that if $\ell:=L_{u_t}(\epsilon;\,\delta)$ then we can 
basically approximate
$u_t$ well enough by a random field $Y$ in $\mathcal{L}(\ell)$ such that
$Y$ replicates $u_t$ to within $\delta$ units. According to 
\eqref{eq:main1} this can be done with ---$\ell=O(\left|\log\epsilon\right|)$---with $\delta$ having the form $\exp\{-K\left|\log\epsilon\right|^{\nicefrac23}\}$
for some $K:=K(t)$. Thus, for example, if we wanted to know how small 
$x\mapsto u_t(x)$ can possibly get, then we could study instead
$Y$ provided that ``how small'' means ``$\exp\{-K\left|\log\epsilon\right|^{\nicefrac23}\}$
or more.'' Our next result shows that this notion of ``how small'' is generic
[and not at all a restriction]. Our proof borrows several important ideas
from a paper by Mueller and Nualart \cite{MN}.

\begin{theorem}\label{th:MN}
	If $\sigma(0)=0$, then for every $t,a>0$ and $x\in\R$,
	\begin{equation}
		\lim_{\epsilon\downarrow 0}
		\frac{1}{\left|\log\epsilon\right|}
		\log\P\left\{ u_t(x) \le \e^{-a \left|\log\epsilon\right|^{\nicefrac23}}
		\right\} =-\infty.
	\end{equation}
\end{theorem}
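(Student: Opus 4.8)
The plan is to turn the assertion into a lower-tail bound for $u_t(x)$ and then to prove that bound by a backward-in-time multiscale scheme, in the spirit of \cite{MN}. Set $\lambda:=|\log\epsilon|$ and $\delta:=\e^{-a\lambda^{2/3}}$, so that $|\log\delta|=a\lambda^{2/3}$ and hence $|\log\delta|^{3/2}=a^{3/2}\lambda$. The conclusion of the theorem is then equivalent to the following: \emph{for every $\beta>0$ and every $t>0$, $x\in\R$, there is $\delta_0>0$ such that}
\[
\P\{u_t(x)\le\delta\}\le\exp\!\bigl(-\beta|\log\delta|^{3/2}\bigr)\qquad\text{for all }\ \delta\in(0,\delta_0).
\]
Indeed, this bound gives $\lambda^{-1}\log\P\{u_t(x)\le\e^{-a\lambda^{2/3}}\}\le-\beta a^{3/2}$ for all large $\lambda$, and $\beta>0$ is arbitrary; the reverse implication uses the case $a=1$. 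By the spatial stationarity of $u_t$ recalled in the introduction we may take $x=0$.

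The reason such a bound should hold is the mechanism isolated by Mueller and Nualart: since $\sigma(0)=0$ and $\sigma$ is Lipschitz, $|\sigma(z)|\le L|z|$ with $L$ the Lipschitz constant, so on a space-time region where $u\le\eta$ the noise coefficient $\sigma(u)$ is at most $L\eta$; running the equation backward in time, smallness of $u$ keeps the driving noise small and therefore tends to perpetuate itself. To implement this, fix $\beta$, choose a finite grid of times $t=s_0>s_1>\dots>s_N\ge0$ and of thresholds $\delta=\delta_0<\delta_1<\dots<\delta_N<1$ (with, say, $\delta_{j+1}\le2\delta_j$, which forces $N$ to be of order $|\log\delta|$), and consider the events
\[
A_j:=\bigl\{\,u_{s_j}(z)\le\delta_j\ \text{for all }z\in I_j\,\bigr\},
\]
where $I_j\ni0$ is an interval, $A_0=\{u_t(0)\le\delta\}$, and $I_0=\{0\}$. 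Writing the Duhamel identity $u_{s_j}(z)=(G_{s_j-s_{j+1}}*u_{s_{j+1}})(z)+B^{(z)}_j$ over the window $(s_{j+1},s_j)$, with $B^{(z)}_j$ the attendant stochastic convolution, the goal is to show that $A_j$, intersected with a high-probability event controlling (i) $\inf_{z\in I_j}B^{(z)}_j$ from below, (ii) a local spatial modulus of continuity of $u_{s_{j+1}}$, and (iii) $\|u\|_\infty$ on the relevant space-time box, implies $A_{j+1}$ for a suitable interval $I_{j+1}\supseteq I_j$.

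The decisive gain is in (i): since the noise coefficient is $\lesssim\delta_j$ on the effective support of the kernel defining $B^{(z)}_j$, the quadratic variation of $B^{(z)}_j$ is only of order $\delta_j^2(s_j-s_{j+1})^{1/2}$, and the small-ball inequality for continuous martingales bounds the failure probability of (i) by $\exp(-c(s_j-s_{j+1})^{-1/2})$ once the time steps are short enough -- stretched-exponentially small, hence negligible against $\exp(-\beta|\log\delta|^{3/2})$. Estimate (ii) is handled by Kolmogorov/Garsia--Rodemich--Rumsey continuity bounds, and estimate (iii) by the upper-tail bounds of \cite{CJK}, which, applied at truncation level $\delta_j^{-M}$, contribute only $\exp(-cM^{3/2}|\log\delta_j|^{3/2})$ -- a bound whose constant can be made as large as one pleases. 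Summing the per-step costs from $j=0$ to $N$,
\[
\P\{u_t(0)\le\delta\}\ \le\ \sum_{j=0}^{N-1}\bigl(\text{cost of step }j\bigr)\ +\ \P\bigl\{u_{s_N}\le\delta_N\text{ on }I_N\bigr\},
\]
and the last term is disposed of either by arranging $s_N=0$, where it vanishes since $u_0\equiv1>\delta_N$, or by invoking the near-independence of $u_{s_N}$ over well-separated sub-intervals of $I_N$ furnished by Theorem \ref{th:main}.

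The step I expect to be the main obstacle is the bookkeeping required to make this scheme actually close: one has to reconcile the demand for very short time increments $s_j-s_{j+1}$ (which is what renders (i), and through it (ii), essentially costless) with the need for the scheme to traverse a macroscopic time span and to let $I_N$ grow, all the while keeping $N$ of order $|\log\delta|$, each $\delta_j<1$, and each truncation exponent $M=M_j$ large enough that (iii) stays below $\exp(-\beta|\log\delta|^{3/2})$. Carrying this optimization through, so that the accumulated error lands on the power $\tfrac32$ (equivalently $\tfrac23$ in the statement) with an arbitrarily large constant, is the delicate heart of the matter; the individual ingredients (the martingale small-ball estimate, the continuity bounds, and the input from \cite{CJK}) are routine, and it is convenient to dispose first of the case of bounded $\sigma$, in which (iii) is vacuous and the scheme reduces to a clean iteration of (i).
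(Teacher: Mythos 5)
Your reduction to the lower--tail bound $\P\{u_t(x)\le\delta\}\le\exp(-\beta|\log\delta|^{3/2})$ for every $\beta>0$ is correct, and your overall strategy is a genuinely different route from the paper's. The paper does \emph{not} discretize time deterministically and run backward; instead it first invokes Mueller's comparison principle to replace $u$ by the solution $v$ with initial data $\mathbf{1}_{(-1,1)}$, defines the \emph{stopping times} $T_{k+1}=\inf\{s>T_k:\inf_{(-1,1)}v_s\le\e^{-k-1}\}$, renormalizes via $w^{(k+1)}_s:=\e^k v^{(k+1)}_s$ so that each stage starts from the same initial state $\mathbf{1}_{(-1,1)}$ and the same Lipschitz constant (here $\sigma(0)=0$ is exactly what makes $\sigma_k(z)=\e^k\sigma(\e^{-k}z)$ uniformly controllable), and then bounds the conditional probability $\P(T_{k+1}-T_k<t/(2n)\mid\mathcal F_{T_k})\le L\exp(-Ln^{1/2}(\log n)^{3/2})$ using the moment estimates of \cite[Appendix]{FK} optimized over the moment index $m$.

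The part of your plan that I think cannot be repaired by bookkeeping is the \emph{additive} accounting of per-step costs. You propose $\P\{u_t(0)\le\delta\}\le\sum_{j<N}(\text{cost of step }j)+\P\{A_N\}$, with per-step cost of order $\exp(-c(s_j-s_{j+1})^{-1/2})$, thresholds doubling, and $N\asymp|\log\delta|$. But either you cover all of $[0,t]$, in which case the average step is $\Delta\asymp t/|\log\delta|$ and the union bound is dominated by a single step, giving only $\exp(-c|\log\delta|^{1/2})$; or you take the steps tiny so that each is cheap, in which case $\sum_j(s_j-s_{j+1})\ll t$ and you must instead ``pay'' at $s_N$ via near-independence of $u_{s_N}$ over $\asymp|\log\delta|^{3/2}$ well-separated points --- but your interval $I_j$ grows only by $O(\sqrt{s_j-s_{j+1}})$ per step (and arguably shrinks, since the heat kernel averages inward), so $|I_N|\lesssim\sum\sqrt{\Delta_j}$ can never reach the required $|\log\delta|^{3}$-ish length. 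The paper sidesteps both dead ends by noting (pigeonhole) that $\{T_n<t\}$ forces at least $\lfloor n/2\rfloor$ of the increments to be $<t/(2n)$ and then \emph{multiplying} the conditional per-step probabilities over those indices via iterated conditioning on $\mathcal F_{T_k}$, obtaining $\binom{n}{\lfloor n/2\rfloor}L^{\lfloor n/2\rfloor}\exp(-L\lfloor n/2\rfloor n^{1/2}(\log n)^{3/2})$. That multiplicative structure is exactly what produces the $n^{3/2}$ (hence $|\log\delta|^{3/2}$) in the exponent, and it relies essentially on the stopping-time filtration; a deterministic time grid does not provide it.

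There is a second, smaller gap: to bound the quadratic variation of $B^{(z)}_j$ by $O(\delta_j^2(s_j-s_{j+1})^{1/2})$ you need $|\sigma(u_r(y))|\lesssim\delta_j$ for \emph{all} $r\in(s_{j+1},s_j)$ and $y$ in the effective kernel support --- not merely at the terminal time $s_j$, which is all that $A_j$ gives you. Going backward in time, the smallness of $u$ on the interior of the window is precisely what you are trying to establish, so the estimate is circular as stated. The paper's forward construction avoids this: on $(T_k,T_{k+1})$ the process $v$ is \emph{by definition} bounded below by $\e^{-k-1}$, and the upper fluctuation on a short horizon is controlled by the pathwise modulus-of-continuity estimate (the $\e^{Qm^3\tau}\tau^{\eta m/4}$ bound) applied to the renormalized field $w^{(k+1)}$ started from $\mathbf{1}_{(-1,1)}$.
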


Throughout this paper, ``log'' denotes the natural logarithm,
$p_t(x)$ denotes the standard heat kernel for $(\nicefrac12)\Delta$,
\begin{equation}
	p_t(x) := \frac{\e^{-x^2/(2t)}}{(2\pi t)^{\nicefrac12}}
	\qquad(t>0~,~x\in\R),
\end{equation}
and $\|Z\|_k:=\{\E(|Z|^k)\}^{1/k}$ denotes the $L^k(\P)$-norm of
a random variable $Z\in L^k(\P)$ $(k\in[1\,,\infty))$.

Let us conclude the Introduction with a brief outline of the paper.
In Section \ref{sec:main} we prove Theorem \ref{th:main},
whose corollaries, Theorems \ref{th:sojourn} and \ref{th:islands},
are proved respectively in \S\ref{sec:sojourn} and \S\ref{sec:islands}.
In a final Section \ref{sec:NegMoments} we state and prove an improved version
of Theorem \ref{th:MN},
which might turn out to be a first step in answering the mentioned
Open Problem.

\section{Proof of Theorem \ref{th:main}}\label{sec:main}
First of all, recall that the solution to the stochastic PDE \eqref{heat}
is the unique continuous solution to the following random
evolution equation \cite{Walsh}:
\begin{equation}\label{mild}
	u_t(x) = 1 + \mathop{\int}_{(0,t)\times\R}
	p_{t-s}(y-x) \sigma\left(
	u_s(y)\right)\, W(\d s \,\d y).
\end{equation}
For all $\beta>0$, let $U^{(\beta)}$ solve the following
closely-related stochastic evolution equation:
\begin{equation}
	U^{(\beta)}_t(x) = 1 +\hskip-1cm
	\mathop{\int}_{(0,t)\times[x-\sqrt{\beta t},
	x+\sqrt{\beta t}]}\hskip-1cm
	p_{t-s}(y-x) \sigma\left(
	U^{(\beta)}_s(y)\right)\, W(\d s \,\d y).
\end{equation}
It has been observed in \cite{CJK}
that the same methods as in \cite{Walsh} can be used
to show that there exists a unique continuous
random field $U^{(\beta)}$ that solves the preceding.
The following result of \cite{CJK} shows that 
$U^{(\beta)}\approx u$ if $\beta$ is large.

\begin{lemma}[\protect{\cite[Lemma 4.2]{CJK}}]\label{lem:u-Ubeta}
	For every $T>0$ there exists finite and positive constants 
	$a_i$ $[i=1,2]$ such that for all $\beta>0$,
	and for all real numbers $k\in[1\,,\infty)$,
	\begin{equation}
		\sup_{\substack{t\in(0,T)\\x\in\R}} \E\left( \left|
		u_t(x)-U^{(\beta)}_t(x)\right|^k\right) \le
		a_1^k \e^{ a_1k\left[ k^2-a_2\beta\right]}.
	\end{equation}
\end{lemma}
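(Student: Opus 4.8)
\emph{Proof proposal.} The plan is to run the usual Gronwall/Picard machinery for the stochastic heat equation, this time for the difference $D_t(x):=u_t(x)-U^{(\beta)}_t(x)$, the one new ingredient being a Gaussian tail bound that supplies the $\e^{-\beta}$ gain. I would begin by recording the classical a priori moment estimate $\sup_{s\in(0,T),\,y\in\R}\|u_s(y)\|_k\le c_1\e^{c_2k^2}$ for all $k\ge1$; this is standard (and is itself proved by the scheme below) using only the mild formulation \eqref{mild}, the linear growth of $\sigma$, and the identity $\int_\R p_r^2(z)\,\d z=(4\pi r)^{-1/2}$. The continuity theory behind $U^{(\beta)}$ also guarantees that $\|D_s(y)\|_k<\infty$ locally uniformly, so the estimates below make sense.

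Subtracting the mild equations for $u$ and $U^{(\beta)}$ and splitting the stochastic integral over the truncation interval $B:=[x-\sqrt{\beta t},\,x+\sqrt{\beta t}]$ and its complement gives $D_t(x)=I_1+I_2$, where $I_1=\int_{(0,t)\times B}p_{t-s}(y-x)\bigl[\sigma(u_s(y))-\sigma(U^{(\beta)}_s(y))\bigr]\,W(\d s\,\d y)$ and $I_2=\int_{(0,t)\times B^{\mathrm c}}p_{t-s}(y-x)\sigma(u_s(y))\,W(\d s\,\d y)$. To each I would apply the Burkholder--Davis--Gundy inequality --- whose $L^k(\P)$-constant is of order $\sqrt k$, so that squaring costs only a factor of order $k$ --- followed by Minkowski's inequality, reducing $\|I_1\|_k^2$ and $\|I_2\|_k^2$ to space--time integrals of $p_{t-s}^2(y-x)$ against the $k$-th moments of the respective integrands.

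For $I_2$, linear growth of $\sigma$ together with the a priori bound give $\sup_{s\le T,\,y}\|\sigma(u_s(y))\|_k^2\le C\e^{Ck^2}$, so this term is at most $Ck\,\e^{Ck^2}\int_0^t\!\!\int_{|z|>\sqrt{\beta t}}p_{t-s}^2(z)\,\d z\,\d s$. The key computation: writing $p_r^2(z)=(4\pi r)^{-1/2}q_{r/2}(z)$ with $q_{r/2}$ a centered Gaussian density of variance $r/2\le t/2$, the inner integral equals $(4\pi(t-s))^{-1/2}\,\P\{|N(0,(t-s)/2)|>\sqrt{\beta t}\}\le(4\pi(t-s))^{-1/2}\e^{-\beta}$ uniformly in $s\in(0,t)$, and integrating the residual $(t-s)^{-1/2}$ over $(0,t)$ contributes only the bounded factor $2\sqrt t$. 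Hence $\|I_2\|_k^2\le C\e^{Ck^2-\beta}$ for $t\le T$. For $I_1$, the Lipschitz property gives $\|\sigma(u_s(y))-\sigma(U^{(\beta)}_s(y))\|_k\le\lip\|D_s(y)\|_k$, so, with $M_k(t):=\sup_{y\in\R}\|D_t(y)\|_k^2$, I obtain the Volterra-type inequality
\[
	M_k(t)\le C\,\e^{Ck^2-\beta}+Ck\int_0^t\frac{M_k(s)}{\sqrt{t-s}}\,\d s\qquad(0\le t\le T).
\]

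The last step is the fractional Gronwall (Volterra) lemma: iterating the kernel $(t-s)^{-1/2}$, or comparing with a Mittag--Leffler function, upgrades this to $M_k(t)\le C'\e^{C'k^2-\beta}$ for all $t\le T$, with $C'=C'(T)$. The crucial point is that $\beta$ enters only through the forcing constant, so the Gronwall factor --- explosive in $k$ but independent of $\beta$ --- does not erode the $\e^{-\beta}$ gain. Taking $k$-th powers of $\sup_{t\le T,\,x\in\R}\|D_t(x)\|_k\le (C')^{1/2}\e^{(C'/2)k^2-\beta/2}$ then yields $\sup_{t\le T,x}\E(|D_t(x)|^k)\le(C')^{k/2}\e^{(k/2)(C'k^2-\beta)}$, which is exactly the claimed form $a_1^k\e^{a_1k(k^2-a_2\beta)}$ once one sets, e.g., $a_1:=\max\{(C')^{1/2},\,C'/2\}$ and $a_2:=1/(2a_1)$. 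I expect the only genuinely delicate point to be the $I_2$ estimate, where the bound must be exponentially small in $\beta$ \emph{uniformly over} $s\in(0,t)$ while still absorbing the $\e^{Ck^2}$ growth of the moments of $u$; everything else is a routine application of the moment machinery for \eqref{mild}.
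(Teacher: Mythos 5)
Your reconstruction is correct, but note that this paper does not actually prove the lemma: it imports it verbatim from \cite[Lemma 4.2]{CJK}, so there is no in-paper proof to compare against. That said, your argument is the standard one that the cited reference uses: split $u_t(x)-U^{(\beta)}_t(x)$ into a near-field stochastic integral over $[x-\sqrt{\beta t},x+\sqrt{\beta t}]$ (controlled via the Lipschitz property and a Volterra/Gronwall iteration, with the $\e^{Ck^2}$ cost coming from the $O(\sqrt{k})$ BDG constant) and a far-field term over the complement, where the Gaussian tail of $\int_{|z|>\sqrt{\beta t}}p_{t-s}^2(z)\,\d z$ supplies the $\e^{-\beta}$ gain uniformly in $s\in(0,t)$. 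The crucial structural point you isolate---that $\beta$ enters only through the inhomogeneous forcing, so the $\beta$-independent Gronwall blow-up in $k$ does not erode the $\e^{-\beta}$ decay---is exactly what makes the bound take the advertised form $a_1^k\e^{a_1k[k^2-a_2\beta]}$.
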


It is easy to adapt the arguments of \cite{CJK} to improve the preceding
in the case that $\sigma$ is bounded. Because all of the key steps
are already in Ref.\ \cite{CJK}, we state the end result without proof.

\begin{lemma}\label{lem:u-Ubeta:sigma:bdd}
	Suppose, in addition, that $\sigma$ is bounded.
	Then for every $T>0$ there exists finite and positive constants 
	$\bar{a}_i$ $[i=1,2]$ 
	such that for all $\beta>0$,
	and for all real numbers $k\in[1\,,\infty)$,
	\begin{equation}
		\sup_{\substack{t\in(0,T)\\x\in\R}} \E\left( \left|
		u_t(x)-U^{(\beta)}_t(x)\right|^k\right) \le
		\bar a_1^k \e^{ \bar a_1k\left[ \log k-\bar a_2\beta\right]}.
	\end{equation}
\end{lemma}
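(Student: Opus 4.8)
The plan is to run the proof of Lemma~\ref{lem:u-Ubeta} (i.e.\ \cite[Lemma~4.2]{CJK}), keeping track of the single place where the boundedness of $\sigma$ produces an extra saving. Write $D_t(x):=u_t(x)-U^{(\beta)}_t(x)$. Subtracting the mild formulation \eqref{mild} from the defining equation of $U^{(\beta)}$, and using that both are driven by the same noise, one obtains $D_t(x)=D^{(1)}_t(x)+D^{(2)}_t(x)$, where
\[
D^{(1)}_t(x):=\int_{(0,t)\times\{|y-x|>\sqrt{\beta t}\}}p_{t-s}(y-x)\,\sigma(u_s(y))\,W(\d s\,\d y)
\]
is the contribution of the noise far from $x$, and
\[
D^{(2)}_t(x):=\int_{(0,t)\times[x-\sqrt{\beta t},\,x+\sqrt{\beta t}]}p_{t-s}(y-x)\bigl[\sigma(u_s(y))-\sigma(U^{(\beta)}_s(y))\bigr]\,W(\d s\,\d y)
\]
is the Lipschitz-comparison term. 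This is exactly the decomposition used in \cite{CJK}.

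First I would bound $D^{(1)}$. Since $\sigma$ is bounded, $p_{t-s}(y-x)^2\sigma(u_s(y))^2\le\|\sigma\|_\infty^2\,p_{t-s}(y-x)^2$, so the quadratic variation is deterministically controlled and the Burkholder--Davis--Gundy inequality gives
\[
\bigl\|D^{(1)}_t(x)\bigr\|_k\le C\sqrt k\,\|\sigma\|_\infty\Bigl(\int_0^t\!\!\int_{|z|>\sqrt{\beta t}}p_{t-s}(z)^2\,\d z\,\d s\Bigr)^{1/2}.
\]
A Gaussian tail estimate shows the double integral is $\le C_T\,\e^{-c\beta}$ for positive $C_T,c$, whence $\|D^{(1)}_t(x)\|_k\le C\sqrt k\,\e^{-c\beta/2}$ uniformly over $t\le T$ and $x$. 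This is the decisive point: for a general Lipschitz $\sigma$ one only has $\|\sigma(u_s(y))\|_k\le|\sigma(0)|+\lip\|u_s(y)\|_k$, and $\sup_{s\le T,\,y}\|u_s(y)\|_k$ grows like $\e^{ck^2}$; it is that factor which is responsible for the ``$k^2$'' inside the exponent in Lemma~\ref{lem:u-Ubeta}. Replacing it by the harmless constant $\|\sigma\|_\infty$ is what turns ``$k^2$'' into (essentially) ``$\log k$.'' Along the same lines one checks, again using only the boundedness of $\sigma$, that $\sup_{s\le T,\,x}\|u_s(x)\|_k\le C\sqrt k$, and likewise for $U^{(\beta)}$; these a priori bounds will be needed below.

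Next I would bound $D^{(2)}$. By BDG, the bound $|\sigma(u_s(y))-\sigma(U^{(\beta)}_s(y))|\le\lip|D_s(y)|$, Minkowski's inequality, and $\int_\R p_r(z)^2\,\d z=(4\pi r)^{-1/2}$, the nondecreasing function $\mathcal H(t):=\sup_{s\le t,\,x}\|D_s(x)\|_k^2$ satisfies a singular Volterra inequality
\[
\mathcal H(t)\le C_1 k\,\e^{-c\beta}+C_2 k\int_0^t\frac{\mathcal H(s)}{\sqrt{t-s}}\,\d s\qquad(0<t\le T),
\]
where $C_1,C_2$ depend only on $T$, $\|\sigma\|_\infty$ and $\lip$. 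Solving this — by iterating the kernel and summing the resulting Mittag-Leffler-type series, or, more economically, by passing to the weighted norm $\sup_{s\le T,\,x}\e^{-\lambda s}\|D_s(x)\|_k$ and choosing $\lambda$ large — and then raising to the power $k/2$ yields, after the routine but slightly delicate bookkeeping that is already carried out in \cite{CJK}, a bound of the asserted shape $\bar a_1^k\,\e^{\bar a_1 k[\log k-\bar a_2\beta]}$; the $\sqrt k$-type a priori bounds from the previous paragraph are used in place of the $\e^{ck^2}$-type inputs that occur in the general case.

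The step I expect to be the main obstacle is precisely controlling the dependence on $k$ in the $D^{(2)}$ estimate: the Lipschitz feedback unavoidably brings in the BDG constant $\sqrt k$ (hence the factor $k$ in front of the Volterra integral), and a careless passage from the Volterra inequality to a pointwise bound reinstates a super-polynomial-in-$k$ blow-up. The content of ``the key steps are already in \cite{CJK}'' is that, combined with the improved $D^{(1)}$ estimate and the $\sqrt k$ a priori moment bounds available for bounded $\sigma$, the final estimate can be held at the level $\exp\{\bar a_1 k[\log k-\bar a_2\beta]\}$ rather than the weaker $\exp\{a_1 k[k^2-a_2\beta]\}$ of Lemma~\ref{lem:u-Ubeta}.
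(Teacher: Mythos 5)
The paper itself states this lemma without proof, referring the reader to \cite{CJK} ("the key steps are already in Ref.\ \cite{CJK}"), so there is no written argument in the present paper against which to check your proposal. Your decomposition $D=D^{(1)}+D^{(2)}$, the bound $\|D^{(1)}_t(x)\|_k\le C\sqrt{k}\,\|\sigma\|_\infty\e^{-c\beta/2}$ using boundedness of $\sigma$ to suppress the $\e^{ck^2}$ contribution from $\sup\|u_s(y)\|_k$, and the observation that this is the decisive improvement over the general case, are all sound and are surely part of whatever argument the authors have in mind.

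However, the step you flag as "the main obstacle" is, as written, a genuine gap rather than a detail left to \cite{CJK}. The Volterra inequality $\mathcal H(t)\le C_1 k\,\e^{-c\beta}+C_2 k\int_0^t\mathcal H(s)(t-s)^{-1/2}\,\d s$ is correct, but every standard way of solving it—the weighted norm $\sup_s\e^{-\lambda s}\mathcal H(s)$ with $\lambda\gtrsim k^2$, or summing the Mittag--Leffler series—produces a Gronwall factor of the form $\exp(c' k^2 T)$. Taking the $k/2$-th power then gives $\E(|D_t(x)|^k)\lesssim (C_1 k)^{k/2}\e^{-c k\beta/2}\e^{c'k^3 T/2}$, which is of the $\e^{a_1 k[k^2-a_2\beta]}$ shape of Lemma \ref{lem:u-Ubeta}, \emph{not} the $\e^{\bar a_1 k[\log k-\bar a_2\beta]}$ shape you are trying to establish. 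The $\sqrt{k}$ a priori moment bound $\mathcal H\le Ck$ that you invoke does not rescue this: for small $\beta$ it only gives $\E(|D|^k)\le (Ck)^{k/2}$, which has the wrong shape to beat $\bar a_1^k k^{\bar a_1 k}\e^{-\bar a_2\bar a_1 k\beta}$ uniformly in the relevant range of $\beta$. Some additional idea that exploits $\|\sigma\|_\infty<\infty$ \emph{inside} the $D^{(2)}$ term is needed—for instance an interpolation $|\sigma(u)-\sigma(U^{(\beta)})|\le (2\|\sigma\|_\infty)^{1-\theta}(\lip |D|)^{\theta}$ which turns the Volterra inequality into a sublinear one whose magnification factor is polynomial rather than $\exp(ck^2)$—but you neither state nor carry out such a device. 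Acknowledging the obstacle and then writing "the routine but slightly delicate bookkeeping that is already carried out in \cite{CJK}" does not close the gap: the bookkeeping in \cite{CJK} produces exactly the $\e^{ck^2}$ factor you need to avoid. So the proposal identifies the right starting point and the right difficulty, but it does not prove the lemma.
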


The process $U^{(\beta)}$ is useful only as a first step in a better
coupling, which we describe next. Define
$U^{(\beta,\,0)}_t(x) := 1$. Then, once $U^{(\beta,l)}$ is defined
[for some $l\ge 0$] we define $U^{(\beta,l+1)}$ as follows:
\begin{equation}
	U^{(\beta,\,l+1)}_t(x) := 1 +\hskip-1cm \mathop{\int}_{(0,t)\times[x-\sqrt{\beta t},
	x+\sqrt{\beta t}]} \hskip-1cm
	p_{t-s}(y-x) \sigma\left(
	U^{(\beta,\,l)}_s(y)\right)\, W(\d s \,\d y).
\end{equation}
In other words, $U^{(\beta,l)}$ is the $l^{\mbox{\scriptsize th}}$ step in the Picard-iteration
approximation to $U^{(\beta)}$. The following result of
\cite{CJK} tells us that if $l$ is large then $U^{(\beta,l)}\approx U^{(\beta)}$.

\begin{lemma}[\protect{\cite[Eq.\ (4.22) \& Lemma 4.4]{CJK}}]%
	\label{lem:Ubeta-Ubetal}
	For every $T>0$ there exists finite and positive constants 
	$b_i$ $[i=1,2]$ such that for all $\beta>0$,
	all integers $n\ge 0$,
	and for all real numbers $k\in[1\,,\infty)$,
	\begin{equation}
		\sup_{\substack{t\in(0,T)\\x\in\R}} \E\left( \left|
		U^{(\beta)}_t(x)-U^{(\beta,n)}_t(x)\right|^k\right) \le
		b_1^k \e^{ b_1k\left[ k^2-b_2n\right]}.
	\end{equation}
	Furthermore, $U^{(\beta,n)}_t \in \mathcal{L}(2n\sqrt{\beta t})$
	for all $\beta,t>0$ and $n\ge 0$.
\end{lemma}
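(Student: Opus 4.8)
This statement combines a quantitative Picard--convergence estimate with a support and stationarity property; I would establish the two in turn. For the convergence bound, set $d_n(t):=\sup_{x\in\R}\|U^{(\beta)}_t(x)-U^{(\beta,n)}_t(x)\|_k$. Since $U^{(\beta)}$ and $U^{(\beta,n+1)}$ solve their equations over the \emph{same} space--time window, their difference is a single Walsh stochastic integral, namely of $p_{t-s}(y-x)[\sigma(U^{(\beta)}_s(y))-\sigma(U^{(\beta,n)}_s(y))]$ over $(0,t)\times[x-\sqrt{\beta t},x+\sqrt{\beta t}]$. Applying the Burkholder--Davis--Gundy inequality for such integrals, then Minkowski's inequality, then the Lipschitz bound on $\sigma$, and finally the identity $\int_\R p_r(z)^2\,\d z=(4\pi r)^{-1/2}$ (after discarding the spatial restriction), one arrives at the closed fractional recursion
\begin{equation*}
	d_{n+1}(t)^2\le c_1 k\int_0^t\frac{d_n(s)^2}{\sqrt{t-s}}\,\d s ,
\end{equation*}
with $c_1$ depending only on the Lipschitz constant of $\sigma$. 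The crucial feature is that the BDG constant contributes a factor \emph{linear} in $k$, which is exactly what generates the $k$--dependence asserted in the lemma.

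The base case $d_0(t)=\sup_x\|U^{(\beta)}_t(x)-1\|_k$ I would bound by $M=M(k,T)$ of the form $\e^{c_2k^2}$, using the standard moment estimates for $U^{(\beta)}$ (which follow from the same scheme, or may simply be quoted from \cite{CJK}). Iterating the recursion over the time simplex, together with the Beta integral $\int_0^t(t-s)^{-1/2}s^{m/2}\,\d s=t^{(m+1)/2}\Gamma(\tfrac12)\Gamma(\tfrac m2+1)/\Gamma(\tfrac m2+\tfrac32)$, yields by induction
\begin{equation*}
	d_n(t)^2\le M\,\frac{(c_3k)^n}{\Gamma(1+n/2)}\qquad(0<t<T),
\end{equation*}
with $c_3=c_3(\mathrm{Lip}_\sigma,T)$. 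Raising to the power $k/2$ and invoking Stirling's formula for $\Gamma(1+n/2)$, one sees that $n\mapsto(c_3k)^n/\Gamma(1+n/2)$ rises to a peak of size $\e^{O(k^2)}$ near $n\asymp k^2$ and then decays at least geometrically in $n$; choosing $b_1$ large and $b_2$ a suitably small multiple of $b_1^{-1}$ (so that $b_1 b_2$ remains bounded while $\e^{O(k^2)}$ is absorbed into $\e^{b_1k^2}$) then produces the stated bound $b_1^k\e^{b_1k[k^2-b_2n]}$. I expect this last step --- reconciling the true profile of $n\mapsto(c_3k)^n/\Gamma(1+n/2)$ with the clean exponential form $k^2-b_2n$ --- to be the only delicate point; the shape of the exponent is itself explained by the fact that this profile begins to decay only once $n$ exceeds a constant multiple of $k^2$. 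Everything upstream is routine once the fractional Gronwall estimate is in hand.

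For the structural claim, I would prove by induction on $n$ that $U^{(\beta,n)}_t(x)$ is measurable with respect to the white noise restricted to $(0,t)\times[x-n\sqrt{\beta t},x+n\sqrt{\beta t}]$. The case $n=0$ is immediate since $U^{(\beta,0)}\equiv 1$; in the inductive step, the integrand defining $U^{(\beta,n+1)}_t(x)$ involves $W$ only over $(0,t)\times[x-\sqrt{\beta t},x+\sqrt{\beta t}]$ and the values $U^{(\beta,n)}_s(y)$ with $s<t$ and $|y-x|\le\sqrt{\beta t}$, each of which, by the inductive hypothesis and $\sqrt{\beta s}\le\sqrt{\beta t}$, depends on $W$ over $(0,s)\times[y-n\sqrt{\beta s},y+n\sqrt{\beta s}]\subseteq(0,t)\times[x-(n+1)\sqrt{\beta t},x+(n+1)\sqrt{\beta t}]$. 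Two intervals of radius $n\sqrt{\beta t}$ whose centres are at distance at least $2n\sqrt{\beta t}$ are disjoint up to an endpoint, so the independence of white noise over disjoint regions shows $U^{(\beta,n)}_t$ has lag $2n\sqrt{\beta t}$. Finally, weak stationarity of $x\mapsto U^{(\beta,n)}_t(x)$ follows by the same induction applied to a spatial translate: the defining recursion is covariant under a joint spatial shift of $x$ and of $W$, while the law of the noise is spatial-shift invariant and $u_0$ is constant. Hence $U^{(\beta,n)}_t\in\mathcal{L}(2n\sqrt{\beta t})$.
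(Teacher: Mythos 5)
Your proposal is correct: the paper itself does not prove this lemma but simply cites \cite[Eq.\ (4.22) \& Lemma 4.4]{CJK}, and the Picard/Gronwall scheme you describe --- Burkholder with constant $O(\sqrt k)$, the Lipschitz bound on $\sigma$, the heat-kernel $L^2$ identity, iteration over the time simplex via the Beta integral, and Stirling to reconcile $(ck^2/n)^{nk/4}$ with the form $\e^{b_1k(k^2-b_2n)}$ --- is precisely the method used there, as is the induction on supports (with the monotonicity $\sqrt{\beta s}\le\sqrt{\beta t}$) for the lag claim. The only place worth being slightly more explicit, which you already flag, is the final bookkeeping showing that the unimodal profile in $n$ is dominated by $\e^{b_1k^2-b_1b_2n}$ once $b_1$ is taken large and $b_2$ small with $b_1b_2$ below a fixed threshold; your sketch of this is sound.
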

Once again, we state---without proof---an improvement in the case
that $\sigma$ is bounded.

\begin{lemma}\label{lem:Ubeta-Ubetal:sigma:bdd}
	Suppose, in addition, that $\sigma$ is bounded.
	Then, for every $T>0$ there exists finite and positive constants 
	$\bar b_i$ $[i=1,2]$ 
	such that for all $\beta>0$,
	all integers $n\ge 0$,
	and for all real numbers $k\in[1\,,\infty)$,
	\begin{equation}
		\sup_{\substack{t\in(0,T)\\x\in\R}} \E\left( \left|
		U^{(\beta)}_t(x)-U^{(\beta,n)}_t(x)\right|^k\right) \le
		\bar b_1^k \e^{\bar b_1k\left[ \log k-\bar b_2n\right]}.
	\end{equation}
\end{lemma}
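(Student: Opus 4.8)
The plan is to repeat, line for line, the proof of Lemma~\ref{lem:Ubeta-Ubetal} in \cite{CJK}, feeding in at the right moments the extra hypothesis that $\sigma$ is bounded. Recall the skeleton of that argument. With $d_n(t):=\sup_{x\in\R}\|U^{(\beta)}_t(x)-U^{(\beta,n)}_t(x)\|_k$, one writes $U^{(\beta)}_t(x)-U^{(\beta,n+1)}_t(x)$ as a Walsh stochastic integral, then applies the Burkholder--Davis--Gundy inequality (whose $L^k$-constant is $\le 4k$), the identity $\sup_x\int_\R p_{t-s}(y-x)^2\,\d y=p_{2(t-s)}(0)=(4\pi(t-s))^{-1/2}$, and the Lipschitz bound on $\sigma$, to reach a recursion of the schematic form $d_{n+1}(t)^2\le c\,k\int_0^t d_n(s)^2\,(t-s)^{-1/2}\,\d s$. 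Solving this against the base term $d_0(t)=\sup_x\|U^{(\beta)}_t(x)-1\|_k$, and controlling the $n$-fold iterated singular integral through the fact that the $n$-fold convolution of $t\mapsto t^{-1/2}$ equals $\pi^{n/2}t^{n/2-1}/\Gamma(n/2)$, is what yields Lemma~\ref{lem:Ubeta-Ubetal}. For general Lipschitz $\sigma$ the base term can only be bounded through a singular Gronwall inequality, which forces $d_0(t)=O(\e^{ck^2t})$; it is that factor, after being raised to the $k$-th power, that puts the $k^2$ into the exponent.

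First I would isolate the two points where boundedness helps. (i) \emph{Base term.} Applying BDG directly to $U^{(\beta)}_t(x)-1=\int p_{t-s}(y-x)\sigma(U^{(\beta)}_s(y))\,W(\d s\,\d y)$ and using $|\sigma|\le\|\sigma\|_\infty$ gives $\|U^{(\beta)}_t(x)-1\|_k^2\le 4k\|\sigma\|_\infty^2\int_0^t p_{2(t-s)}(0)\,\d s=O(k)$, uniformly on $(0\,,T)$ and with \emph{no} Gronwall step; the same computation gives $\sup_n\sup_x\|U^{(\beta,n)}_t(x)\|_k=O(\sqrt k)$ uniformly in $n$, hence $d_n(t)=O(\sqrt k)$ for every $n$. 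Thus $\e^{ck^2 t}$ is replaced throughout by $O(\sqrt k)$, which upon taking $k$-th powers turns $\e^{ck^3t}$ into $k^{k/2}=\e^{(k/2)\log k}$ — this is where the $\log k$ in place of $k^2$ originates. (ii) \emph{Increment bound.} Inside the recursion one replaces $|\sigma(a)-\sigma(b)|\le\lip|a-b|$ by the sharper $|\sigma(a)-\sigma(b)|\le\min(2\|\sigma\|_\infty\,,\lip|a-b|)\le(2\|\sigma\|_\infty\lip)^{1/2}|a-b|^{1/2}$, so that, after taking $L^k$-norms, a factor $\|U^{(\beta)}_s(y)-U^{(\beta,n-1)}_s(y)\|_k$ becomes $(2\|\sigma\|_\infty\lip)^{1/2}\|U^{(\beta)}_s(y)-U^{(\beta,n-1)}_s(y)\|_{k/2}^{1/2}$; iterating this halves the moment index at every Picard step.

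Running (i) and (ii) together then completes the proof: feeding the $O(k)$ base estimate through the recursion yields a decaying factor $\e^{-\bar b_1\bar b_2kn}$ once $n\gtrsim k^2$ (from the $\Gamma$-functions, via Stirling), the prefactor $(Ck)^{k/2}=C^{k/2}\e^{(k/2)\log k}$ accounts for the $\e^{\bar b_1 k\log k}$ term, and the remaining intermediate range of $n$ is handled by interleaving the geometric convergence of the Picard scheme with the uniform bound $d_n(t)=O(\sqrt k)$ and the moment-halving estimate of (ii); a final bookkeeping of constants, taking $\bar b_1$ large and $\bar b_2$ small, produces the stated inequality for all integers $n\ge 0$ and all real $k\ge 1$. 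The genuinely delicate step — and the main obstacle — is exactly this interpolation over the Picard index: over a window of $n$ the crude estimates degrade to the same $\e^{ck^3 t}$ that one has without boundedness, so one must exploit $\|\sigma\|_\infty<\infty$ \emph{at each Picard step} (through the $\min$ in (ii)), not merely at the base step, to keep $\log k$ rather than $k^2$. Since, as the authors note, every one of these steps is already present in \cite{CJK}, I would carry out the adaptation and suppress the routine computations.
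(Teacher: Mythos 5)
The paper states this lemma without proof, merely remarking that the adaptation of the arguments in \cite{CJK} to bounded $\sigma$ is routine, so there is no proof on record to compare against. Your skeleton (redo the Picard scheme of \cite{CJK} feeding in $\|\sigma\|_\infty<\infty$) is the intended one, and your observation (i) is exactly the right source of the gain: with $\sigma$ bounded, BDG gives $\sup_{t\le T,x}\|U^{(\beta)}_t(x)-1\|_k^2\le 4k\|\sigma\|_\infty^2\int_0^t p_{2(t-s)}(0)\,\d s=O(k)$ with no Gronwall step, and likewise $\sup_n\sup_{t,x}\|U^{(\beta,n)}_t(x)-1\|_k=O(\sqrt k)$; this replaces the $\e^{ck^2t}$ of the Lipschitz case with $O(\sqrt k)$, which is where the $\log k$ comes from.

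Your step (ii), however, is a wrong turn, and it masks the genuine gap you correctly flag at the end. The inequality $|\sigma(a)-\sigma(b)|\le(2\|\sigma\|_\infty\lip)^{1/2}|a-b|^{1/2}$ is true, and feeding it into BDG does replace $\|D_n\|_k$ by $\|D_n\|_{k/2}^{1/2}$; but iterating this down to $L^2$ after roughly $\log_2 k$ steps leaves you with $(\text{poly}(k))\cdot\bigl(\|D_{n-\lceil\log_2 k\rceil}\|_2\bigr)^{1/2^{\lceil\log_2 k\rceil}}\approx(\text{poly}(k))\cdot\bigl(\|D_{n-\lceil\log_2 k\rceil}\|_2\bigr)^{1/k}$. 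Since the $L^2$ Picard decay is $\bigl(C/m\bigr)^{m/4}$, raising it to the power $1/k$ gives $\exp\bigl(-\tfrac{m}{4k}\log(m/C)\bigr)$, which only dominates $\e^{-cn}$ once $n\gtrsim\e^{ck}$ -- far worse than the $n\gtrsim k^2$ threshold that the plain BDG/$\Gamma$-function argument already gives, and it buys you nothing you didn't already have from (i). So the ``interleaving'' that you invoke to cover the range $\log k\lesssim n\lesssim k^2$ is not actually supplied by (ii), and the proposal as written has no mechanism that produces the claimed decay $\e^{-\bar b_2 n}$ (against a merely polynomial-in-$k$ prefactor) in that intermediate window. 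This is a real gap, not a bookkeeping issue: the plain Picard bound $\sqrt{h_n}\lesssim\sqrt k\,(Ck^2/n)^{n/4}$ is worse than the uniform bound $\sqrt k$ throughout that window, and neither one is $\le\bar b_1 k^{\bar b_1}\e^{-\bar b_1\bar b_2 n}$ for, say, $n\asymp k^{3/2}$. Either a sharper estimate is needed there, or (as the application in the proof of Theorem \ref{th:main}, which only uses $n\asymp\log k$, suggests) the lemma should be read in the range of parameters where (i) plus the $n\gtrsim k^2$ Picard decay suffice; but your proposal needs to say explicitly which of these it is doing rather than appealing to ``a final bookkeeping of constants.''
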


Now we are ready to establish Theorem \ref{th:main}.

\begin{proof}[Proof of Theorem \ref{th:main}]
	Choose and fix $t>0$.
	The final assertion of Lemma \ref{lem:Ubeta-Ubetal} implies
	that the process $x\mapsto
	Y_x:= U^{(\beta,n)}_t(x)$ is in $\mathcal{L}(2n\sqrt{\beta t})$
	for every $\beta>0$ and $n\ge 0$. Therefore, we may apply
	Lemmas \ref{lem:u-Ubeta} and \ref{lem:Ubeta-Ubetal} in conjunction
	with Chebyshev's inequality to see that for all $k\in[1\,,\infty)$
	and $\delta>0$,
	\begin{equation}\label{eq:basic0}
		\inf_{Y\in\mathcal{L}(2n\sqrt{\beta t})}
		\sup_{x\in\R}\P\left\{ |u_t(x)-Y_x|>\delta\right\}
		\le (2c_1/\delta)^k \e^{c_1k[k^2-c_2(\beta\wedge n)]},
	\end{equation}
	where $c_1:=\max\{a_1\,,b_1\}$, $c_2:=\min\{(a_1a_2)/c_1,\,(b_1b_2)/c_1\}$ do not depend on $(\beta\,,n\,,k\,,\delta)$. Now we choose 
	$\beta = n := 1+\lfloor (2/c_2) k^2\rfloor$ in order to find that
	there exists $\bar c\in(1\,,\infty)$ such that for all $k$ sufficiently large,
	\begin{equation}\label{eq:basic1}
		\inf_{Y\in\mathcal{L}(\bar c k^3)}
		\sup_{x\in\R}\P\left\{ |u_t(x)-Y_x|>\delta\right\}
		\le \delta^{-k}\e^{-2k^3/\bar c}.
	\end{equation}
	Because $\bar c$ does not depend on $\delta$, we can set $\delta:=
	\exp(- k^2/\bar c)$ to deduce from the preceding
	that for every $\nu\in(0\,,1)$ fixed,
	\begin{equation}
		L_{u_t} \left( \e^{-k^3/\bar c}\,;\e^{- k^2/\bar c}
		\right) \le \bar c k^3,
	\end{equation}
	uniformly for all $k$ sufficiently large. It follows that if
	$\epsilon:=\exp(-k^3/\bar c)$, then
	\begin{equation}
		L_{u_t} \left( \epsilon\,;\exp\left\{-\frac{%
		\left|\log\epsilon\right|^{\nicefrac23}}{\bar c^{\nicefrac13}}
		\right\}
		\right) \le \bar{c}^2\left|\log\epsilon\right|.
	\end{equation}
	In the case that $\epsilon$ is a general positive number, 
	\eqref{eq:main1} follows from the preceding and a
	simple monotonicity argument.
	
	In the case that $\sigma$ is bounded, we proceed similarly as in the general case,
	but apply Lemmas \ref{lem:u-Ubeta:sigma:bdd} and 
	\ref{lem:Ubeta-Ubetal:sigma:bdd} in
	place of Lemmas \ref{lem:u-Ubeta} and \ref{lem:Ubeta-Ubetal}, and then
	select the various parameters accordingly. In this way, we find 
	the following improvement to \eqref{eq:basic0} in the case that
	$\sigma$ is bounded:
	\begin{equation}\label{eq:basic2}
		\inf_{Y\in\mathcal{L}(2n\sqrt{\beta t})}
		\sup_{x\in\R}\P\left\{ |u_t(x)-Y_x|>\delta\right\}
		\le (2c_1'/\delta)^k \e^{c_1'k[\log k-c_2'(\beta\wedge n)]},
	\end{equation}
	where $c_1',\,c_2'$ do not depend on $(\beta\,,n\,,k\,,\delta)$.
	Now we choose $\beta=n:=1+\lfloor (2/c_2')\log k\rfloor$
	in order to deduce the existence of a constant $c''\in(1\,,\infty)$
	such that for all suffiently large $k$,
	\begin{equation}\label{eq:basic3}
		\inf_{Y\in\mathcal{L}\left( c'' [\log k]^{\nicefrac32} \right)}
		\sup_{x\in\R}\P\left\{ |u_t(x)-Y_x|>\delta\right\}
		\le \delta^{-k}\e^{-2k\log k/ c''}.
	\end{equation}
	This is our improvement to \eqref{eq:basic1} in the case that
	$\sigma$ is bounded. In particular,
	for all $k$ large,
	\begin{equation}\label{eq:basic4}
		\inf_{Y\in\mathcal{L}(c''[\log k]^{\nicefrac32})}
		\sup_{x\in\R}\P\left\{ |u_t(x)-Y_x|> k^{-1/c''}\right\}
		\le \e^{-k\log k/c''}.
	\end{equation}
	If $\epsilon:=\exp\{-(1/c'')k\log k\}$ is small, then
	$k\approx c''\left|\log\epsilon\right|/\log\left|\log\epsilon\right|$
	and \eqref{eq:main2} follows from \eqref{eq:basic4}
	for every $\theta\in(0\,,1/c'')$.
	We apply monotonicity in order to deduce \eqref{eq:main2}
	for general [small] $\epsilon$.
\end{proof}

\section{Proof of Theorem \ref{th:sojourn}}\label{sec:sojourn}
Before we proceed with the proof we need a few technical results.
Suppose $Y\in\mathcal{L}(\ell)$ for some $\ell>0$, and define,
for all integers $n\ge 1$ and real numbers $\alpha>0$,
\begin{equation}
	\mathfrak{Y}_\alpha(n) := \int_0^{n\ell} \1_{\{ Y_x\ge
	\bar G((n\ell)^{-\alpha})\}}\,\d x,
\end{equation}
where
\begin{equation}\label{G}
	\bar G(a) := \sup\left\{ b>0:\, \P\{ Y_0\ge b \}\ge a\right\}.
\end{equation}

\begin{lemma}\label{lem:Y} 
	Assume that $Y\in\mathcal{L}(\ell)$ for some $\ell\ge 1$. 
	 Then for every
	integer $k\ge 3$ there exists a universal constant $C_k\in(0\,,\infty)$
	such that for all $\alpha\in\left(0\,,\nicefrac12 \right)$ and  $n\ge 2$,
	\begin{equation}
		\left\| \frac{\mathfrak{Y}_\alpha(n)}{\E\mathfrak{Y}_\alpha(n) }-1
		\right\|_k \le C_k\cdot \frac{\ell^{\alpha}}{n^{\frac{1}{2}-\alpha}}.
	\end{equation}
\end{lemma}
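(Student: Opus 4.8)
The plan is to estimate the normalized sojourn variable by a second‑moment (indeed $k$‑th moment) computation, exploiting the lag‑$\ell$ independence of $Y$. Write $N:=n\ell$, $a_n:=(n\ell)^{-\alpha}$, $p_n:=\P\{Y_0\ge \bar G(a_n)\}$, and $Z_x:=\1_{\{Y_x\ge \bar G(a_n)\}}$, so that $\mathfrak{Y}_\alpha(n)=\int_0^{N}Z_x\,\d x$ and, by stationarity, $\E\mathfrak Y_\alpha(n)=N p_n$. The key structural input is that $Z_x$ and $Z_y$ are independent whenever $|x-y|\ge\ell$; hence the centered field $\widetilde Z_x:=Z_x-p_n$ has covariance supported in $\{|x-y|<\ell\}$ and bounded there by $p_n$ (since $|\widetilde Z_x|\le 1$ and $\Var(Z_x)\le p_n$, using $p_n\le\nicefrac12$ for $\alpha<\nicefrac12$, $n\ge 2$). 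First I would record the definition of $\bar G$ in \eqref{G}: it guarantees $p_n\ge a_n=(n\ell)^{-\alpha}$, so that $\E\mathfrak Y_\alpha(n)=Np_n\ge N^{1-\alpha}=(n\ell)^{1-\alpha}$. This lower bound on the mean is what converts an absolute moment bound on $\mathfrak Y_\alpha(n)-\E\mathfrak Y_\alpha(n)$ into the stated relative bound.

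Next I would bound $\big\|\mathfrak Y_\alpha(n)-\E\mathfrak Y_\alpha(n)\big\|_k=\big\|\int_0^N\widetilde Z_x\,\d x\big\|_k$. The cleanest route for general even $k$ is a blocking/$m$‑dependence argument: partition $[0,N)$ into $\lceil N/\ell\rceil$ consecutive intervals of length $\ell$; the block sums $S_1,S_2,\dots$ satisfy $|S_i|\le\ell$, and $S_i$ is independent of $S_j$ whenever $|i-j|\ge 2$. Thus $\{S_i\}$ is a $1$‑dependent sequence, which one splits into two independent (odd‑ and even‑indexed) families, and then applies Rosenthal's inequality (or simply Marcinkiewicz–Zygmund, or the elementary moment bound for sums of independent mean‑zero bounded variables) to each family. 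This yields, for each fixed integer $k\ge 3$ (really each even $k$, then interpolate), a universal $C_k$ with
\begin{equation*}
	\Big\|\int_0^N\widetilde Z_x\,\d x\Big\|_k\le C_k\Big(\big(\ell^2\cdot (N/\ell)\cdot p_n\big)^{1/2}+\ell\cdot (N/\ell)^{1/k}\Big)=C_k\Big((\ell N p_n)^{1/2}+\ell N^{1/k}\Big).
\end{equation*}
For the ranges of parameters at hand the Gaussian‑type term $(\ell N p_n)^{1/2}$ dominates (since $p_n\ge N^{-\alpha}$ makes $\ell N p_n\ge \ell N^{1-\alpha}$, which is at least a power of $N$, so the $N^{1/k}$ term is lower order once $k$ is large; and for the finitely many intermediate $k\ge 3$ one simply absorbs it into $C_k$ at the cost of replacing the exponent by the worse of the two, which is still $\le\nicefrac12-\alpha$ because $N^{1/k-1}\le N^{-1/2}$ for $k\ge 3$ — here I would be slightly careful and may need $k\ge 3$ precisely to keep $1/k-\nicefrac12\le -\alpha+\text{(something)}$; alternatively restrict the blocking term bound using $|\widetilde Z_x|\le 1$ more crudely). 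Dividing by $\E\mathfrak Y_\alpha(n)=Np_n$ gives
\begin{equation*}
	\Big\|\frac{\mathfrak Y_\alpha(n)}{\E\mathfrak Y_\alpha(n)}-1\Big\|_k\le C_k\frac{(\ell N p_n)^{1/2}}{Np_n}+(\text{l.o.t.})=C_k\Big(\frac{\ell}{Np_n}\Big)^{1/2}+(\text{l.o.t.})\le C_k\Big(\frac{\ell}{N^{1-\alpha}}\Big)^{1/2}=C_k\frac{\ell^{\nicefrac\alpha 2+\cdots}}{\cdots},
\end{equation*}
and substituting $N=n\ell$ turns $(\ell/N^{1-\alpha})^{1/2}=(\ell^{\alpha}/n^{1-\alpha})^{1/2}$; one then checks this is $\le \ell^{\alpha}/n^{\nicefrac12-\alpha}$ for $\ell\ge 1$, $\alpha<\nicefrac12$, $n\ge 2$ (indeed $(\ell^\alpha/n^{1-\alpha})^{1/2}\le \ell^{\alpha/2}/n^{(1-\alpha)/2}$ and $\ell^{\alpha/2}\le\ell^{\alpha}$, $n^{-(1-\alpha)/2}\le n^{-(\nicefrac12-\alpha)}$ since $(1-\alpha)/2\ge \nicefrac12-\alpha\iff \alpha\ge 0$). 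Folding the lower‑order block term and all absolute constants into a new $C_k$ finishes the proof.

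The main obstacle, and the point requiring genuine care rather than bookkeeping, is controlling the moments of $\int_0^N\widetilde Z_x\,\d x$ with a \emph{universal} constant depending only on $k$ and with the correct power $p_n^{1/2}$ (not $p_n^{1/k}$) in the leading term — i.e., getting a true variance‑type bound uniformly over the one‑dependent structure. Rosenthal's inequality delivers exactly this, but one must verify that its constant can be taken to depend only on $k$ (it can, classically), and one must handle the continuum (integral rather than sum) version — either by discretizing at scale $1$ and using $|Z_x-Z_y|\le 1$, or by noting the block decomposition already produces genuinely independent bounded summands after the odd/even split, so no continuum subtleties survive. A secondary, minor nuisance is confirming the claim $p_n\ge(n\ell)^{-\alpha}$ from the definition \eqref{G} of $\bar G$ when the distribution of $Y_0$ has atoms; here the $\sup$ in \eqref{G} and right‑continuity considerations give $\P\{Y_0\ge\bar G(a)\}\ge a$ by an approximation argument, which is all that is needed.
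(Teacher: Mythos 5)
Your proposal is correct and follows essentially the same route as the paper: both partition $[0,n\ell]$ into length-$\ell$ blocks, split into the odd- and even-indexed block sums (which by the lag-$\ell$ property are sums of independent, bounded, mean-zero terms), apply a Rosenthal/Burkholder-type moment inequality, and then divide by $\E\mathfrak{Y}_\alpha(n)\ge(n\ell)^{1-\alpha}$. The only real difference is that the paper invokes Burkholder's inequality via Hall--Heyde with the cruder bound $\Var(Z_j)\le\ell^2$, which already suffices, whereas you track the sharper $\Var(Z_j)\le\ell^2 p_n$ and carry the extra $p_n^{1/2}$ through Rosenthal's inequality; that refinement is harmless but unnecessary for the stated conclusion.
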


\begin{proof}
	We can write
	\begin{equation}
		\mathfrak{Y}_\alpha(n) := \sum_{j=0}^{n-1} Z_j,
		\quad\text{where}\quad
		Z_j := \int_{j\ell}^{(j+1)\ell} \1_{\{ Y_x\ge
		\bar G((n\ell)^{-\alpha})\}}\,\d x.
	\end{equation}
	
	Define
	\begin{equation}
		S_n^{\textnormal{(o)}} := \sum_{0\le 2j+1\le n-1} \left( Z_{2j+1}
		-\E Z_{2j+1} \right),\quad
		S_n^{\textnormal{(e)}} := \sum_{0\le 2j\le  n-1} \left( Z_{2j}-
		\E Z_{2j}\right).
	\end{equation}
	It follows that 
	\begin{equation}\label{S_n:decomp}
		\mathfrak{Y}_\alpha(n)-\E\mathfrak{Y}_\alpha(n)= 
		S^{(\textnormal{o})}_n + S^{(\textnormal{e})}_n.
	\end{equation}
	The processes $S^{(\textnormal{o})}$ and $S^{(\textnormal{e})}$
	are mean-zero random walks, and hence martingales [in their respective filtrations].
	Define
	\begin{equation}
		X^{(\textnormal{x})}_k := S^{(\textnormal{x})}_k-
		S^{(\textnormal{x})}_{k-1}\qquad
		(k\ge 1)
	\end{equation}
	to be the increments of $S^{(\textnormal{x})}$ for $\textnormal{x}\in
	\{\textnormal{o}\,,\textnormal{e}\}$, and
	$\mathcal{G}_k^{(\textnormal{x})}$ the sigma-algebra generated by 
	$\{X^{(\textnormal{x})}_j\}_{j=1}^k$.
	Because $\textnormal{Var}(Z_1)\le \E(Z_1^2)\le \ell\E(Z_1)\le
	\ell^{2} $, $|X^{(\textnormal{x})}_j| < \ell$ for every $j$, 
	and since $\ell\ge 1$, an application of Burkholder's inequality 
	\cite{Burkholder} (specifically, see Hall and Heyde
	\cite[Theorem 2.10, p.\ 23]{HallHeyde}) implies that for every $k\ge 1$
	there exists a universal constant $c_k\in(0\,,1)$ such that
	for every $k\ge 2$ and $n\ge 2$
	and $\textnormal{x}\in\{\textnormal{o}\,,\textnormal{e}\}$,
	\begin{equation}
		c_k^k \E\left(  \big\vert S^{(\textnormal{x})}_n  
		\big\vert^k\right)
		\le n^{k/2}\ell^k + \ell^k\le
		2n^{k/2}\ell^k .
	\end{equation}
	The lemma follows from the above,  \eqref{S_n:decomp}, and 
	Minkowski's inequality 
	together with the observation that 
	$\E\mathfrak{Y}_\alpha(n) \ge (n\ell)^{1-\alpha}$.
\end{proof}

\begin{proof}[Proof of Theorem \ref{th:sojourn}]
	Throughout the demonstration, we choose and fix a time
	$t>0$. 
	
	Consider first Case 1. According to \cite{CJK}, 
	there exist constants $A_1,\ldots,A_4\in(0\,,\infty)$
	such that for all $\lambda\ge 1$ and $x\in\R$,
	\begin{equation}\label{eq:tail:PAM}
		A_1 \e^{-A_2(\log\lambda)^{\nicefrac32}}\le
		\P\left\{ u_t(x) > \lambda\right\} \le 
		A_3 \e^{-A_4(\log \lambda)^{\nicefrac32}}.
	\end{equation}
	[One could prove that the preceding probability does not depend on $x\in\R$.]
	
	According to Theorem \ref{th:main}, for every $m> 1$ there exists
	$c\in(0\,,\infty)$ such that for all 
	$R$ large enough,
	we can find a process $Y\in\mathcal{L}(c\log R)$ such that
	\begin{equation}
		\P\left\{ |u_t(x)-Y_x| \ge 1\right\} \le \text{const}\cdot R^{-m}.
	\end{equation}
	[One could prove that the preceding probability does not depend on $x\in\R$.]
	Note, in particular, that
	\begin{equation}\label{u-Y}\begin{split}
		\P\left\{ \int_0^R\1_{\{|u_t(x)-Y_x|\ge 1\}}\,\d x
			\ge 1 \right\} &\le \E\left(\int_0^R
			\1_{\{|u_t(x)-Y_x|\ge 1\}}\,\d x\right)\\
		& \le \text{const}\cdot R^{1-m}.
	\end{split}\end{equation}
	
	For all $\alpha\in(0\,,1)$ and $R$ large enough,
	\begin{equation}\begin{split} \label{eq:tail:coupling}
		\P\left\{ Y_x \ge\e^{\alpha(\log R)^{\nicefrac23}} \right\} &\le 
			\P\left\{ u_t(x)\ge \e^{(\alpha/2)(\log R)^{\nicefrac23}}\right\}
			+\textnormal{const}\cdot R^{-m}\\
		&\le A_3 R^{-A_4(\alpha/2)^{\nicefrac32}} + \textnormal{const}
			\cdot R^{-m}\\
		&\le\textnormal{const}\cdot  R^{-A_4(\alpha/2)^{\nicefrac32}},
	\end{split}\end{equation}
	provided that $m>A_4$. Similarly,
	\begin{equation}\begin{split}
		\P\left\{ Y_x\ge \e^{\alpha(\log R)^{\nicefrac23}}\right\}
			&\ge A_1 R^{-A_2(2\alpha)^{\nicefrac32}} - \textnormal{const}\cdot
			R^{-m} \\
		&\ge \textnormal{const}\cdot R^{-A_2(2\alpha)^{\nicefrac32}}.
	\end{split}\end{equation}
	provided that
	$m>(2A_2)^{\nicefrac32}$. We combine the preceding two bounds, and 
	then relabel $\alpha$ to see that
	there exist $B_1,B_2,B_3,B_4\in(0\,,\infty)$
	and $\alpha_0\in(0\,,\nicefrac14)$ such that for all $\alpha\in(0\,,\alpha_0)$,
	\begin{equation}\label{1}
		B_1 \e^{B_2(\alpha \log R)^{\nicefrac23}}
		\le -1+\bar G (R^{-\alpha}) \le 
		1+ \bar G(R^{-\alpha}) \le B_3 \e^{B_4(\alpha \log R)^{\nicefrac23}},
	\end{equation}
	where $\bar{G}$ was defined in \eqref{G}. According to \eqref{u-Y},
	\begin{equation}\label{2}\begin{split}
		&\P\left\{ \int_0^R \1_{\{ u_t(x) \ge 1+\bar G(R^{-\alpha})\}}\,\d x
			\ge 1+ \int_0^R\1_{\{Y_x\ge \bar{G}(R^{-\alpha})\}}\,\d x \right\}\\
		&\hskip3in\leq \textnormal{const}\cdot R^{1-m},
	\end{split}\end{equation}
	and
	\begin{equation}\label{3}\begin{split}
		&\P\left\{ \int_0^R \1_{\{ u_t(x) \ge -1+\bar G(R^{-\alpha})\}}\,\d x
			\le -1+ \int_0^R\1_{\{Y_x\ge \bar{G}(R^{-\alpha})\}}\,\d x \right\}
			\\
		&\hskip3in\leq \textnormal{const}\cdot R^{1-m}.
	\end{split}\end{equation}
	We emphasize
	that``const'' does not depend on $R$ in  the previous two displays.
	We may apply Lemma \ref{lem:Y} with $\ell:=c\log R$,
	$n:=R/\ell$ and $\mathfrak{Y}_{\alpha}(n)=\int_0^R \1_{\{ Y_x\ge
	\bar G(R^{-\alpha})\}}\,\d x$ to see that for all $k\ge 2,\,R$ sufficiently large,
	\begin{equation}
		\P\left\{ \left|
		\int_0^R\frac{\1_{\{Y_x\ge \bar{G}(R^{-\alpha})\}}}{%
		\E\mathfrak{Y}_{\alpha}(n)}
		\,\d x  - 1\right|
		\ge R^{-\alpha}\right\} \le \textnormal{const}\cdot 
		\frac{(\log R)^{k/2}}{R^{ k(1-4\alpha)/2}}.
	\end{equation}
	Let us pause and recall that $\alpha<\alpha_0<\nicefrac14$, so that the
	right-hand side is at most $R^{-2}$ provided that we have chosen $k$
	sufficiently large. We also note that there exists $\gamma\in(0\,,1)$
	such that
	\begin{equation}
	 	R^{1-\alpha} \le  \E \mathfrak{Y}_{\alpha} (n) \le R^{1-\alpha \gamma},
	\end{equation}
	for all sufficiently large $R$; see
	\eqref{eq:tail:PAM} and \eqref{eq:tail:coupling}.
	Therefore, we can combine \eqref{1}, \eqref{2},
	and \eqref{3}, together with the Borel--Cantelli lemma to see that 
	as long as $\alpha_0$ were selected sufficiently small,
	\begin{equation}
		0<\liminf_{\substack{R\to\infty\\ R\in\Z}}
		\frac{\log \left|E_\alpha(R)\right|}{\log R}\le
		\limsup_{\substack{R\to\infty\\ R\in\Z}} 
		\frac{\log \left|E_\alpha(R)\right|}{\log R}<1\qquad\text{a.s.}
	\end{equation}
	A monotonicity argument finishes the proof for Case 1.
	
	Case 2 is proved similarly, but we apply the following estimate \cite{CJK}
	in place of \eqref{eq:tail:PAM}:
	$C_1 \exp(-C_2 \lambda^2)\le
	\P\{ u_t(x) > \lambda\} \le C_3 \exp(-C_4 \lambda^2)$
	$(\text{for }\lambda\ge 1)$. We omit the details.
\end{proof}

\section{Proof of Theorem \ref{th:islands}}\label{sec:islands}
	First of all, let us note that the conditions of the theorem are non vacuous.
	In other words, we need to prove that there exists $b>1$
	such that $\P\{u_t(0)\ge b\}>0$. Because
	$\E u_t(0)=1$, it follows that there exists $b\ge 1$ such that
	$\P\{u_t(0)\ge b\}>0$. Suppose to
	the contrary that $\P\{u_t(0)>1\}=0$. Then, $u_t(0)$ is a.s.\
	equal to 1. It follows that
	that the stochastic integral in \eqref{mild}
	vanishes a.s.\ for  $x=0$. The corresponding quadratic variation must too;
	that is,
	\begin{equation}
		\int_0^t \d s\int_{-\infty}^\infty 
		\d y\ \left[ p_{t-s}(y) \sigma\left(u_s(y)\right)\right]^2
		=0\quad\text{a.s.}
	\end{equation}
	Since the heat kernel never vanishes, we find that $\sigma\left(u_s(y)\right)=0$
	for almost all $(s\,,y)\in(0\,,t)\times \R$, whence for
	all $(s\,,y)\in(0\,,t)\times\R$ by continuity. This is a contradiction since $u_0\equiv 1$. Therefore,
	there exists $b>1$ such that $\P\{u_t(0)\ge b\}>0$. 
	Now we proceed with our proof of
	the bulk of Theorem \ref{th:islands}.
	
	Theorem \ref{th:islands} is a simple consequence of Theorem \ref{th:main}
	together with ideas that are borrowed from a classical paper by
	Erd\H{o}s and R\'enyi \cite{ErdosRenyi} on the length of the longest
	run of heads in an infinitely-long sequence of independent coin tosses. 
	
	Choose and fix two integer $R,m\gg 1$ and a  real $\delta\in(0\,,1)$
	small enough that $a-2\delta > 1$ and $\P\{u_t(0)>b + 2\delta\}>0$. 
	According to Theorem \ref{th:main} we can
	find a constant $c\in(0\,,\infty)$---independent of $R$---and a random
	field $Y\in\mathcal{L}(c\log R)$ such that
	\begin{equation}
		\P\left\{ |u_t(x)-Y_x| > \delta\right\} \le \frac{c}{R^m}.
	\end{equation}
	[One can prove that the probability does not depend on $x$.] 
	Define $x_j:= cj\log R$ for
	all non negative integers $j$, and observe that
	\begin{equation}\label{eq:u-Y}
		\P\left\{ \max_{0\le j\le  \lfloor R/(c\log R)\rfloor}
		\left| u_t(x_j)-Y_{x_j}\right| >\delta\right\}\le \textnormal{const}
		\cdot R^{1-m}.
	\end{equation}
	
	Let us call the index $j$ ``good'' if
	$Y_{x_j},Y_{x_{j+2}}<a-\delta$ and $Y_{x_{j+1}}>b+\delta$. Otherwise
	$j$ is deemed ``bad.''
	Clearly,
	\begin{equation}\begin{split}
		p&:=\P\left\{ j\text{ is good} \right\} \\
		&=\left( \P\left\{ Y_0<a-\delta\right\}\right)^2\cdot
			\P\left\{ Y_0>b+\delta\right\}\\
		&\ge \left( \P\{ u_t(0)<a-2\delta\} - \frac{c}{R^m}\right)^2\cdot
			\left( \P\{u_t(0)>b+2\delta\} - \frac{c}{R^m}\right).
	\end{split}\end{equation}
	We may observe that $p$ does not depend on $j$. Moreover,
	$\P\{u_t(0)<a-2\delta\}\wedge \P\{u_t(0)>b+2\delta\}>0$
	because of the choice of $(b\,,\delta)$
	and the fact that $\E u_t(0)=1<a-2\delta$.
	Therefore, we may choose $m$ large enough to ensure that $p>0$;
	note that we may also choose $m$ independently of $R\gg 1$. 
	
	Because 
	\begin{equation}
		\P\left\{  j\,,j+3\,,\ldots,j+3n\text{ are all bad}\right\}
		=(1-p)^n,
	\end{equation}
	it follows that
	\begin{equation}\begin{split}
		&\P\left\{{}^\exists \, 0\le j\le
			\left\lfloor \frac{R}{c\log R}\right\rfloor:\,
			 j\,,j+3\,,\ldots,j+3\lfloor \gamma\log R\rfloor
			\text{ are all bad}\right\}\\
		&\hskip3.3in\le \textnormal{const} \cdot R^{-2},
	\end{split}\end{equation}
	provided that $\gamma$ is a sufficiently-large universal constant.
	This and the Borel--Cantelli lemma together imply
	that a.s.\  for all sufficiently-large integers $R$, the maximum 
	distance between two good points is at most
	$6\gamma \log R\cdot c\log R$. Combined with \eqref{eq:u-Y}, 
	we can conclude that the size of the largest 
	island is at most $6c\gamma (\log R)^2$. This proves the
	theorem for Case 1. 
	
	If Case 2 holds, then we proceed
	exactly as we did above, but can find our random field
	$Y\in \mathcal{L}(c[\log\log R]^{\nicefrac32})$
	instead of $\mathcal{L}(c\log R)$. The remaining details are
	omitted.
\qed

\section{Proof of Theorem \ref{th:MN}}\label{sec:NegMoments}
We conclude by proving Theorem \ref{th:MN}. Throughout
we assume that
\begin{equation}
	\sigma(0)=0.
\end{equation}
[This of course includes Case 1.]
In that case Mueller's comparison principle  \cite{CJK,Mueller} guarantees that
$u_t(x)\ge 0$ for all $t>0$ and $x\in\R$ a.s.
We offer the following quantitative improvement, which clearly implies Theorem \ref{th:MN}:

\begin{theorem}\label{th:NegMoments}
	For every $t>0$ there exist
	$A,B\in(0\,,\infty)$  such that uniformly for all
	$\epsilon\in(0\,,1)$ and $x\in\R$,
	\begin{equation}
		\P \{ u_t(x) < \epsilon \}\le A\exp\left( -B\left\{ \left| 
		\log\epsilon\right|\cdot\log\left|\log\epsilon\right|
		\right\}^{\nicefrac32} \right).
	\end{equation}
\end{theorem}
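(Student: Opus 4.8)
The plan is to imitate the Mueller–Nualart strategy for negative moments of the solution, now made quantitative. The starting point is the mild formulation \eqref{mild} together with Mueller's comparison principle, which forces $u_t(x)\ge 0$. Fix $t>0$ and $x\in\R$; by stationarity in $x$ we may take $x=0$. The key idea is a bootstrap over dyadic time scales: if $u_s(\cdot)$ is not too small on a spatial interval around $0$ at some time $s$, then the heat-semigroup part of \eqref{mild}, namely $\int p_{t-s}(y)u_s(y)\,\d y$, is bounded below by a definite constant, while the stochastic-integral part is a martingale whose fluctuation we can control in $L^k$ using moment bounds for $u$. More precisely, I would set up a sequence of times $t_j = t(1-2^{-j})$ and a sequence of thresholds $\eta_j$ decreasing to $0$, and prove inductively that the event $\{\inf_{|y|\le \rho_j} u_{t_j}(y) \ge \eta_j\}$ has probability at least $1 - \sum_{i\le j}p_i$, where the $p_i$ are small; the last step $j\to\infty$ (or rather, stopping at a finite $j$ depending on $\epsilon$) yields $u_t(0)\ge \epsilon$ off an event of the claimed probability.

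The concrete steps: (i) Localize. Replace $u$ by the truncated process $U^{(\beta)}$ of the excerpt with $\beta$ chosen as a function of the scale, using Lemma \ref{lem:u-Ubeta} to bound the $L^k$-error; this is what makes spatial intervals at different times quasi-independent and is exactly why the $|\log\epsilon|^{2/3}$ (resp. the $[\log|\log\epsilon|/|\log\epsilon|]^\theta$) rate appears. (ii) One-step estimate. On the event that $u_{s}$ exceeds $\eta$ on $[-\rho,\rho]$, write $u_{t'}(y) = (\text{deterministic part}) + (\text{noise part})$ for $t'$ slightly larger than $s$; the deterministic part is $\ge c\eta$ by a heat-kernel lower bound (here the Lipschitz hypothesis is not needed, only $\sigma(0)=0$ to kill the drift away from $0$ — actually we need $\sigma$ Lipschitz to bound the noise variance by $\|u\|$), and the noise part has $L^k$-norm controlled by $\|u\|_{L^k}^{}$ times a small factor coming from the short time increment. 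Choosing $k$ growing like a power of $j$ and using Chebyshev gives a failure probability that decays super-exponentially in $j$. (iii) Sum the scales. Iterate $N \asymp \log|\log\epsilon|$ times (each step roughly squares the smallness, so after $N$ steps one reaches $\epsilon$ if $N$ is logarithmic in $|\log\epsilon|$), and add up the failure probabilities; optimizing the growth rate of $k$ in $j$ against the number of steps produces the exponent $\{|\log\epsilon|\cdot\log|\log\epsilon|\}^{3/2}$.

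The main obstacle I anticipate is step (ii), the quantitative one-step lower bound: one must show that adding a mean-zero noise term of small $L^k$-size to a deterministic quantity of size $c\eta$ keeps the result above, say, $(c/2)\eta$ on a slightly smaller spatial interval, \emph{uniformly in the spatial variable}, and the natural tool — a chaining/Garsia-type argument for the spatial modulus of continuity of the noise term combined with Chebyshev at high moments — forces careful tracking of how $k$, the interval radius $\rho_j$, the threshold $\eta_j$, and the time increment $t_{j+1}-t_j$ all scale with $j$. Getting these four sequences to balance so that the per-step failure probability is summable \emph{and} the final threshold is only $\exp(-O(|\log\epsilon|^{2/3}))$-small rather than, say, polynomially small, is the delicate part; this is precisely where the $\nicefrac23$ and $\nicefrac32$ exponents are pinned down, and it is the place where the ideas of Mueller and Nualart \cite{MN} enter in an essential way. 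Everything else — the localization via $U^{(\beta)}$, Borel–Cantelli-free summation, and the final translation into the stated tail bound — should be routine given the lemmas already recorded in the excerpt.
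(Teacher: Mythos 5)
Your proposal does land near the Mueller--Nualart circle of ideas, but the iteration scheme you describe is structurally different from the paper's and, as written, has a gap that would prevent it from producing a bound that decays in $\epsilon$.

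The paper's argument runs as follows.  By Mueller's comparison principle it suffices to bound $\P\{\inf_{(-1,1)} v_t \le \e^{-n}\}$, where $v$ solves the SPDE from the compactly-supported initial datum $\mathbf 1_{(-1,1)}$.  One then defines level-crossing stopping times $T_k := \inf\{s>T_{k-1}: \inf_{(-1,1)} v_s \le \e^{-k}\}$ and rescales: $w^{(k+1)} := \e^k v^{(k+1)}$ solves the SPDE with nonlinearity $\sigma_k(z) := \e^k \sigma(\e^{-k}z)$, restarted at $\mathbf 1_{(-1,1)}$.  Because $\sigma(0)=0$ and $\sigma$ is Lipschitz, $|\sigma_k(z)|\le \lip |z|$ \emph{uniformly in $k$}, so the modulus-of-continuity bounds of \cite[Appendix]{FK} apply with constants independent of $k$.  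Optimizing the resulting estimate $Q\,\e^{Qm^3\tau}\tau^{\eta m/4}$ over $m$ with $\tau = t/(2n)$ yields a per-crossing bound $\P(T_{k+1}-T_k < t/(2n)\mid\F_{T_k})\le L\exp(-Ln^{1/2}(\log n)^{3/2})$.  The crucial final step is combinatorial: if $T_n < t$ then at least $\lfloor n/2\rfloor$ of the $n$ increments $T_{k+1}-T_k$ are $\le t/(2n)$, so one gets a \emph{product} of $\sim n/2$ such small factors (times a binomial coefficient killed by Stirling), producing the exponent $n^{3/2}(\log n)^{3/2}$.  No localization via $U^{(\beta)}$ appears anywhere in this proof; that machinery is a red herring here.

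The gap in your version is the structure of the recursion.  You propose dyadic times $t_j = t(1-2^{-j})$, thresholds $\eta_j$ squaring at each step, $N\asymp\log|\log\epsilon|$ steps, and you ``add up the failure probabilities'' $p_j$.  But for fixed small $j$, the threshold $\eta_j$ and time increment $t_{j+1}-t_j$ are independent of $\epsilon$, so $p_j$ is an $\epsilon$-independent constant; the sum $\sum_{j\le N} p_j$ is therefore bounded below by $p_1$ and cannot decay as $\epsilon\downarrow0$.  By contrast, the paper never sums one-step failure probabilities: it multiplies $\sim n/2$ small per-crossing probabilities.  That multiplicative, level-crossing structure is exactly what gives the extra factor of $n$ in the exponent ($n\cdot n^{1/2}(\log n)^{3/2}$ rather than $n^{1/2}(\log n)^{3/2}$) and is the essential Mueller--Nualart idea you gesture at but do not capture.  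Two smaller points: you also miss the rescaling $\sigma_k(z) = \e^k\sigma(\e^{-k}z)$, which is where the hypotheses $\sigma(0)=0$ and Lipschitz actually do their work (yielding uniform-in-$k$ Lipschitz bounds); and the $\nicefrac23$ in Theorem \ref{th:MN} is not a delicate by-product of balancing four sequences in the iteration but simply a consequence of the (stronger) tail $\exp(-B\{|\log\epsilon|\log|\log\epsilon|\}^{3/2})$ dominating $\exp(-a|\log\epsilon|^{2/3})$.
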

Before we prove this result, let us state and prove two corollories
to Theorem \ref{th:NegMoments}. The corollaries are of independent
interest, but also showcase the usefulness of quantitative estimates 
in this area. The first corollary identifies an upper bound for the 
exponential growth of the high negative moments of $u_t(x)$ when
$\sigma(0)=0$. We believe that the rate provided below is sharp.

\begin{corollary}\label{co1}
	For all $t>0$ and $x\in\R$,
	\begin{equation}
		\limsup_{k\to\infty} \left[
		\left( \frac{\log k}{k}\right)^3 
		\log \E \left( | u_t(x)|^{-k} \right)\right]<\infty.
	\end{equation}
\end{corollary}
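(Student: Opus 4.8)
The plan is to derive the bound on high negative moments directly from the tail estimate in Theorem \ref{th:NegMoments} by integrating over the tail. Write, for fixed $t>0$ and $x\in\R$,
\begin{equation}
	\E\left(|u_t(x)|^{-k}\right) = \int_0^\infty \P\left\{|u_t(x)|^{-k}>\lambda\right\}\,\d\lambda
	= \int_0^\infty \P\left\{u_t(x)<\lambda^{-1/k}\right\}\,\d\lambda,
\end{equation}
using that $u_t(x)\ge0$ a.s.\ under the hypothesis $\sigma(0)=0$ (Mueller's comparison principle). Split the integral at $\lambda=1$: the contribution from $\lambda\le1$ is at most $1$, and for $\lambda>1$ we substitute $\epsilon:=\lambda^{-1/k}\in(0\,,1)$ and apply Theorem \ref{th:NegMoments} to get $\P\{u_t(x)<\epsilon\}\le A\exp(-B\{|\log\epsilon|\log|\log\epsilon|\}^{3/2})$.

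After the substitution $\lambda = \epsilon^{-k}$, so $\d\lambda = -k\epsilon^{-k-1}\d\epsilon$ and $|\log\epsilon|=(1/k)\log\lambda$, the tail integral becomes something of the form
\begin{equation}
	k\int_0^1 \epsilon^{-k-1}
	\exp\left(-B\left\{|\log\epsilon|\cdot\log|\log\epsilon|\right\}^{\nicefrac32}\right)\d\epsilon,
\end{equation}
and writing $u:=|\log\epsilon|=-\log\epsilon$ this is $k\int_0^\infty \exp(ku)\exp(-B\{u\log u\}^{3/2})\,\d u$ (up to the harmless constant $A$ and boundary terms). Since $\{u\log u\}^{3/2}$ grows faster than any linear function of $u$, the exponent $ku - B\{u\log u\}^{3/2}$ is maximized at some $u=u_*(k)$, and a Laplace-type estimate on this one-dimensional integral gives the bound. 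The key computation is locating $u_*(k)$: setting the derivative of $ku-B(u\log u)^{3/2}$ to zero gives $k \approx \tfrac32 B (\log u_*)^{3/2}\sqrt{u_*}\,(1+o(1))$, so $u_*\asymp (k/\log k)^2$ up to constants, and the maximal value of the exponent is of order $k\cdot u_* \asymp (k/\log k)^2\cdot k = (k/\log k)^3$ — more precisely $\log\E(|u_t(x)|^{-k}) \le C\, (k/\log k)^3$ for large $k$, which is exactly the claimed bound $\limsup_{k\to\infty}(\log k/k)^3\log\E(|u_t(x)|^{-k})<\infty$.

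The main obstacle is making the Laplace asymptotics rigorous and uniform: one must verify that the integral is genuinely dominated by a neighborhood of $u_*(k)$ and that the logarithmic corrections inside the braces do not spoil the $(k/\log k)^3$ scaling. A clean way around a delicate saddle-point analysis is to bound crudely: for every $u\ge e$ one has $\{u\log u\}^{3/2}\ge u^{3/2}$, so the integrand is at most $\exp(ku - Bu^{3/2})$, whose integral is $\le C\exp(C'k^3)$ by the elementary inequality $\sup_{u>0}(ku-Bu^{3/2}) = c_B k^3$ — this already gives $\limsup_k k^{-3}\log\E(|u_t(x)|^{-k})<\infty$, which is weaker than the stated $(\log k/k)^3$ rate. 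To recover the logarithmic improvement one does need to keep the $\log u$ factor, but only a lower bound $\log u\ge \log\log(1/\epsilon)$ on the relevant range $u\asymp (k/\log k)^2$ is required; substituting this back yields $\{u\log u\}^{3/2}\gtrsim u^{3/2}(\log k)^{3/2}$ on that range, and optimizing $ku - B'(\log k)^{3/2}u^{3/2}$ over $u$ gives precisely the $(k/\log k)^3$ growth. So the real work is a careful two-sided control of where the exponent peaks, after which the bound drops out of Theorem \ref{th:NegMoments} with no further input.
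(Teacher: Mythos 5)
Your proposal follows essentially the same route as the paper: integrate the tail bound from Theorem \ref{th:NegMoments}, change variables to $s=|\log\epsilon|$ so that $\log\E(|u_t(x)|^{-k})\lesssim \sup_{s\ge 1}\bigl[ks-B(s\log s)^{\nicefrac32}\bigr]$, and locate the saddle to extract the $(k/\log k)^3$ growth --- exactly what the paper does via Laplace's method. One small slip in your heuristic saddle computation: solving $k\approx\tfrac32 B(\log u_*)^{\nicefrac32}\sqrt{u_*}$ gives $u_*\asymp k^2/(\log k)^3$, not $(k/\log k)^2$; this compensates the arithmetic error in the following line (since $(k/\log k)^2\cdot k=k^3/(\log k)^2\ne(k/\log k)^3$), so the stated conclusion is correct, and the ``clean'' two-sided argument you give at the end --- bounding $\log u\gtrsim\log k$ on the dominant range and optimizing $ku-B'(\log k)^{\nicefrac32}u^{\nicefrac32}$ --- is sound and fills in the ``elementary estimates'' that the paper omits.
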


\begin{proof}
	Theorem \ref{th:NegMoments} implies that 
	$u_t(x)>0$ a.s., whence $X:=1/u_t(x)$ is well defined. Because
	 $\E(X^k)=k\int_0^\infty
	\lambda^{k-1}\P\{X>\lambda\}\,\d\lambda$, we can divide the
	integral into
	two pieces where: (i) $\lambda<\e$; and (ii) $\lambda\ge\e$. In
	this way we find that
	\begin{equation}
		\E(X^k) 
		\le \e^k+Ak\cdot\int_1^\infty \e^{f_k(s)}\,\d s,
	\end{equation}
	where
	\begin{equation}
		f_k(s):=ks-B(s\log s)^{\nicefrac32}.
	\end{equation} 
	Laplace's method [and/or the method of stationary
	phase] tells us that
	\begin{equation}
		\log\E(X^k)\le(1+o(1))\sup_{s\ge 1} f_k(s)
		\qquad\text{as $k\to\infty$}.
	\end{equation}
	The corollary follows from this and a series of elementary
	estimates which we omit.
\end{proof}
We mention [and verify] the second corollary to Theorem 
\ref{th:NegMoments} next. This corollary describes a
bound for how close
$u_t(x)$ can come to zero, as $x\to\infty$.

\begin{corollary}\label{co2}
	For all $t>0$ and all $\zeta>\zeta_0$ for some $\zeta_0>0$,
	\begin{equation}
		\lim_{x\to\infty}\left[ \e^{\zeta(\log x)^{\nicefrac23}}
		u_t(x)\right]=\infty\qquad\text{a.s.}
	\end{equation}
\end{corollary}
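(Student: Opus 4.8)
The plan is to derive Corollary~\ref{co2} from Theorem~\ref{th:NegMoments} by a Borel--Cantelli argument on the scale parameter~$R$. Concretely, I will aim to show that for \emph{every} fixed $\zeta>0$ one has, almost surely, $u_t(x)\ge\tfrac12\e^{-(1+o(1))\zeta(\log x)^{\nicefrac23}}$ as $x\to\infty$. This already yields the corollary: given $\zeta_*>0$, run the bound with $\zeta=\zeta_*/2$ to get $\e^{\zeta_*(\log x)^{\nicefrac23}}u_t(x)\ge\tfrac12\e^{(\zeta_*-(1+o(1))\zeta_*/2)(\log x)^{\nicefrac23}}\to\infty$ a.s.; since $\zeta_*>0$ is arbitrary, $\zeta_0$ may in fact be taken to be any positive number. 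Thus the task reduces to controlling how small $u_t$ can be on a large spatial window $[0,R]$.

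Fix $\zeta>0$ and write $\epsilon_R:=\e^{-\zeta(\log R)^{\nicefrac23}}$. I would estimate $\P\{\inf_{x\in[0,R]}u_t(x)<\epsilon_R/2\}$ for integers $R\ge2$. To convert the infimum over an interval into a union over a countable set, introduce a polynomially fine mesh $x_j:=jR^{-q}$, $0\le j\le R^{q+1}$, for a large fixed integer $q$. Theorem~\ref{th:NegMoments} together with a union bound gives
\[
\P\Bigl\{\min_{0\le j\le R^{q+1}}u_t(x_j)<\epsilon_R\Bigr\}\le(R^{q+1}+1)\,A\exp\!\Bigl(-B\bigl\{\,|\log\epsilon_R|\cdot\log|\log\epsilon_R|\,\bigr\}^{\nicefrac32}\Bigr).
\]
Since $|\log\epsilon_R|=\zeta(\log R)^{\nicefrac23}$ and $\log|\log\epsilon_R|=(\nicefrac23+o(1))\log\log R$, the exponent is $-(1+o(1))B(\nicefrac23\,\zeta)^{\nicefrac32}(\log R)(\log\log R)^{\nicefrac32}$, which---because $(\log\log R)^{\nicefrac32}\to\infty$---eventually dominates the polynomial prefactor $R^{q+1}$. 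Hence the right-hand side is summable in $R$.

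For the interpolation between mesh points I would use the spatial regularity of $x\mapsto u_t(x)$: the standard moment estimates for the mild formulation~\eqref{mild} (as in \cite{Walsh,Dalang:99,CJK}) furnish, for each even $k$, a finite $C_k=C_k(t)$ with $\E(|u_t(x)-u_t(y)|^k)\le C_k|x-y|^{k/2}$, uniformly in $x,y$ by the $x$-homogeneity of the law. Applying Kolmogorov's continuity criterion on each unit subinterval of $[0,R]$ and summing over the $\lceil R\rceil$ of them yields, for any small $\eta>0$,
\[
\E\Bigl(\sup\bigl\{\,|u_t(x)-u_t(y)|^k:\ x,y\in[0,R],\ |x-y|\le R^{-q}\,\bigr\}\Bigr)\le C_k'\,R\cdot R^{-qk(\nicefrac12-\eta)}.
\]
By Markov's inequality the probability that this oscillation exceeds $\epsilon_R/2$ is at most $C_k''\,R^{1-qk(\nicefrac12-\eta)}\e^{k\zeta(\log R)^{\nicefrac23}}=R^{1-qk(\nicefrac12-\eta)+o(1)}$, which is $\le R^{-2}$ once $q$ and $k$ are large, hence again summable. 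On the complement of these two bad events, $u_t(x)\ge\epsilon_R-\epsilon_R/2=\epsilon_R/2$ throughout $[0,R]$.

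Finally, Borel--Cantelli produces a.s.\ a random $R_0$ with $\inf_{x\in[0,R]}u_t(x)\ge\tfrac12\e^{-\zeta(\log R)^{\nicefrac23}}$ for all integers $R\ge R_0$; taking $R=\lceil x\rceil$ and using $(\log\lceil x\rceil)^{\nicefrac23}=(1+o(1))(\log x)^{\nicefrac23}$ gives the bound $u_t(x)\ge\tfrac12\e^{-(1+o(1))\zeta(\log x)^{\nicefrac23}}$ claimed in the first paragraph. The only ingredient that needs genuine care---and the step I expect to be the main technical point---is the uniform modulus-of-continuity estimate over the \emph{growing} window $[0,R]$ used above; it is routine given the $x$-homogeneity of $u_t$ and the moment bounds, and it is precisely to avoid tracking the sharp constant there (and in the exponent of the second paragraph) that it is convenient to state the corollary for $\zeta>\zeta_0$ rather than for all $\zeta>0$.
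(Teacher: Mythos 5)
Your proposal is correct and follows essentially the same route as the paper's proof: a union bound over a fine spatial mesh using Theorem~\ref{th:NegMoments}, a Kolmogorov-continuity estimate (the paper cites \cite[Lemma A.3]{FK} for the moment bound $\E(|u_t(x)-u_t(y)|^k)\le\e^{ck^3t}|x-y|^{k/2}$) to interpolate between mesh points, and Borel--Cantelli. The paper works on dyadic blocks $[n,2n]$ with mesh spacing $n^{-\gamma}$ rather than directly on $[0,R]$ with spacing $R^{-q}$, but the two are interchangeable. Your side observation that the argument actually yields the statement for \emph{every} $\zeta>0$ (i.e.\ $\zeta_0$ can be taken as small as one likes) appears to be correct: the $(\log\log R)^{\nicefrac32}$ factor coming from Theorem~\ref{th:NegMoments} eventually beats the polynomial prefactor from the mesh for any fixed $\zeta>0$, and the continuity step only costs $\e^{k\zeta(\log R)^{\nicefrac23}}=R^{o(1)}$; the paper's statement ``for some $\zeta_0>0$'' is conservative.
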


\begin{proof}
	Let $\gamma>0$ be fixed, and define, for every $n\ge 1$,
	a set $A(n)$ as the following finite collection of points
	in the interval $[n\,,2n]$:
	\begin{equation}
		A(n) := \left\{ n + jn^{-\gamma} \right\}_{j=0}^{%
		1+\lfloor n^{1-\gamma}\rfloor}.
	\end{equation}
	According to Theorem \ref{th:NegMoments},
	for all $\zeta>0$ large enough,
	\begin{equation}\label{eq:lower}
		\P\left\{ \inf_{x\in A(n)} u_t(x)< 
		3\e^{-\zeta(\log n)^{\nicefrac23}}\right\}
		=O\left( n^{-2} \right)
		\qquad\text{as }n\to\infty.
	\end{equation}
	
	According to \cite[Lemma A.3]{FK}, there exists an $c\in(0\,,\infty)$
	such that for all $t>0$, $k\in[2\,,\infty)$, and $x,y\in\R$,
	\begin{equation}\label{eq:FK:space}
		\E\left( | u_t(x)-u_t(y)|^k\right) \le
		\e^{ck^3t}|x-y|^{k/2}.
	\end{equation}
	Therefore, a careful appeal to the Kolmogorov continuity theorem
	implies that for all $k\in[2\,,\infty)$, $\eta\in(0\,,1)$, and $t>0$ 
	\begin{equation}
		B_k:= B_k(t\,,\eta):=\sup_I
		\E\left( \sup_{\substack{x,y\in I\\x\neq y}} 
		\frac{|u_t(x)-u_t(y)|^k}{
		|x-y|^{k\eta/2}}\right) <\infty,
	\end{equation}
	where ``$\sup_I$'' denotes the supremum over all 
	closed intervals
	$I\subset\R$ of length one. We omit the details, as they are standard.
	
	Next we apply Chebyshev's inequality to see that for
	every $t>0$, $\eta\in(0\,,1)$, and $k\in[2\,,\infty)$,
	\begin{equation}\begin{split}
		&\P\left\{ \sup_{\substack{n\le x,y\le 2n\\
			|x-y|\le n^{-\gamma}}} |u_t(x)-u_t(y)| \ge
			2\e^{-\zeta(\log n)^{\nicefrac23}}
			\right\} \\
		&\hskip1in\le \sum_{j=0}^{n-1}
			\P\left\{ \sup_{\substack{j\le x,y\le j+1\\
			|x-y|\le n^{-\gamma}}} |u_t(x)-u_t(y)| \ge
			\e^{-\zeta(\log n)^{\nicefrac23}}
			\right\}\\
		&\hskip1in
			\le B_k n^{1-(k\gamma\eta /2)}\e^{\zeta k(\log n)^{\nicefrac23}}
			=O\left( n^{1-(k\gamma\eta/2)+o(1)}\right).
		\label{eq:lower2}
	\end{split}\end{equation}
	Now we choose and fix $k>4/(\eta\gamma)$ so that the left-hand side of
	\eqref{eq:lower2} sums [in $n$]. It follows from \eqref{eq:lower},
	\eqref{eq:lower2}, and the triangle inequality that, for every $\zeta>0$ large enough,
	\begin{equation}
		\sum_{n=1}^\infty\P\left\{ \inf_{x\in(n,2n)}
		u_t(x) < \e^{-\zeta(\log n)^{\nicefrac23}}\right\}<\infty.
	\end{equation}
	The Borel--Cantelli lemma completes the proof.
\end{proof}

\begin{proof}[Proof of Theorem \ref{th:NegMoments}]
	We are going to prove that for all $n\ge 1$,
	\begin{equation}
		\P\left\{ \inf_{x\in(-1,1)}u_t(x) \le \e^{-n}\right\}
		\le A\exp\left( -B (n\log n)^{\nicefrac32} \right).
	\end{equation}
	Since the distribution of $u_t(x)$ does not depend on $x$,
	this is a stronger result than the one advertised by the statement
	of the theorem.
	
	Let $v_t(x)$ denote the unique continuous
	solution to \eqref{heat} subject to
	$v_0(x)=\mathbf{1}_{(-1,1)}(x)$.
	Because $v_0(x)\le 1=u_0(x)$ for all $x$, 
	Mueller's comparison principle \cite{Mueller} tells us that
	there exists a null set off which $u_t(x)\ge v_t(x)$. Therefore, it
	suffices to prove that for all $n\ge 1$,
	\begin{equation}\label{goal:v}
		\P\left\{ \inf_{x\in(-1,1)}v_t(x) \le \e^{-n}\right\}
		\le A\exp\left( -B (n\log n)^{\nicefrac32} \right).
	\end{equation}
	Set $T_0:=0$, and then define iteratively
	\begin{equation}
		T_{k+1} := \inf\left\{ 
		s> T_k:\,  \inf_{x\in(-1,1)}v_s(x) \le \e^{-k-1}
		\right\},
	\end{equation}
	where $\inf\varnothing:=\infty$. Evidently, the $T_k$'s
	are $\{\F_t\}_{t>0}$-stopping times, where $\F_t$
	denotes the filtration generated by time $t$ by all the
	values of the white noise. Without loss of any generality
	we may assume that $\{\F_t\}_{t>0}$ is augmented in the
	usual way, so that $t\mapsto v_t$ is a $C(\R)$-valued
	strong Markov process.
	
	Next we observe that for every $k\ge 1$,
	\begin{equation}
		\e^k v_{T_k}(x) \ge \mathbf{1}_{(-1,1)}(x)
		\quad\text{for all $x\in\R$, a.s.\ on 
		$\{T_k<\infty\}$.}
	\end{equation}
	Therefore, we apply first the strong Markov property,
	and then Mueller's comparison principle, in order
	to see that the following holds a.s.\ on $\{T_k<t\}$:
	\begin{equation}\begin{split}
		&\P\left( \left. T_{k+1}-T_k < \frac{t}{2n}\
			\right|\, \mathcal{F}_{T_k}\right)\\
		&\hskip1in\le \P\left\{ \inf_{
			s\in(0,t/(2n))}  \inf_{x\in(-1,1)}
			v^{(k+1)}_s(x) \le \e^{-k-1}\right\},
	\end{split}\end{equation}
	where $v^{(k+1)}$ is the unique continuous solution to \eqref{heat}
	[for a different white noise, pathwise], starting at $v^{(k+1)}_0(x):=
	\exp(-k)\1_{(-1,1)}(x)$. Note that
	\begin{equation}
		w^{(k+1)}_t(x) := \e^k v^{(k+1)}_t(x)
	\end{equation}
	solves the SPDE
	\begin{equation}
		\frac{\partial}{\partial t} w^{(k+1)}_t(x) = \frac12 \frac{%
		\partial^2}{\partial x^2}w^{(k+1)}_t(x) + \sigma_k\!\left(
		w^{(k+1)}_t(x)\right)\eta^{(k+1)}_t(x),
	\end{equation}
	subject to $w^{(k+1)}_0(x)=\mathbf{1}_{(-1,1)}(x)$,
	where $\eta^{(k+1)}$ is a space-time white noise for every
	$k$ and
	\begin{equation}
		\sigma_k(x) := \e^k \sigma\!\left( \e^{-k}x\right).
	\end{equation}
	Therefore, the following holds a.s.\ on $\{T_k<t\}$:
	\begin{equation}\label{key}\begin{split}
		&\P\left( \left. T_{k+1}-T_k < \frac{t}{2n}\
			\right|\, \mathcal{F}_{T_k}\right)\\
		&\hskip1in\le \P\left\{ \sup_{\substack{x\in(-1,1)\\
			s\in(0,t/(2n))}} \left| w^{(k+1)}_s(x) -w^{(k+1)}_0(x)
			\right| \ge 1-\frac{1}{\e}\right\}.
	\end{split}\end{equation}
	
	Let $\lip$ denote the optimal Lipschitz constant of $\sigma$.
	Because $\sigma(0)=0$, it follows that
	\begin{equation}
		\sup_{k\ge 1} \left| \sigma_k(z) \right| \le \lip|z|
		\qquad\text{for all $z\in\R$}.
	\end{equation}
	It is this important property that allows us
	to appeal to the estimates of \cite[Appendix]{FK} ,
	and deduce the following: For all $\eta\in(0\,,1)$,
	there exists a constant $Q:=Q(\eta)\in(0\,,\infty)$ such that for all $k\ge 0$,
	$m\in[2\,,\infty)$, and $\tau\in(0\,,1)$,
	\begin{equation}
		\sup_{k\ge 0}
		\E\left( \sup_{\substack{x\in(-1,1)\\s\in (0,\tau)}}
		\left| \frac{w^{(k+1)}_s(x) -w^{(k+1)}_0(x)}{s^{\eta/4}}
		\right|^m \right) \le Q \e^{Qm^3\tau}.
	\end{equation}
	In other words,
	\begin{equation}
		\sup_{k\ge 0}
		\E\left( \sup_{\substack{x\in(-1,1)\\s\in (0,\tau)}}
		\left| w^{(k+1)}_s(x) -w^{(k+1)}_0(x)
		\right|^m \right) \le Q \e^{Qm^3\tau} \tau^{\eta m/4}.
	\end{equation}
	We apply this inequality with $\tau:=t/(2n)$ and optimize over $m$ in order to 
	deduce from \eqref{key} that there exists a constant $L:=L(\eta,t)\in(0\,,\infty)$
	such that for all integers $n>t/2$
	the following holds a.s.\ on $\{T_k<t\}$:
	\begin{equation}\label{key2}
		\P\left( \left. T_{k+1}-T_k < \frac{t}{2n}\
		\right|\, \mathcal{F}_{T_k}\right)
		\le L\exp\left( -L n^{\nicefrac12} (\log n)^{\nicefrac32}\right).
	\end{equation}
	Finally, we notice that if $T_n<t$, then certainly there are
	at least $\lfloor n/2\rfloor$-many distinct 
	values of $k\in\{0\,,\ldots,n-1\}$ such that
	$T_{k+1}-T_k\le t/(2n)$. Therefore, \eqref{key2} implies
	that for all $n>t/2$,
	\begin{equation}
		\P\left\{ T_n<t\right\} \le \binom{n}{\lfloor n/2\rfloor}
		L^{\lfloor n/2\rfloor}
		\exp\left( -L \left\lfloor n/2\right\rfloor n^{\nicefrac12}
		(\log n)^{\nicefrac32}\right).
	\end{equation}
	This, Stirling's formula, and monotonicity together imply 
	\eqref{goal:v}.
\end{proof}

\begin{small}
\bigskip

\noindent\textbf{Daniel Conus}\\
\noindent Lehigh University, Department 
	of Mathematics, Christmas--Saucon Hall, 14 East Packer Avenue,
	Bethlehem, PA 18015 (\texttt{daniel.conus@lehigh.edu})\\[2mm]
\noindent\textbf{Mathew Joseph} \&\  \textbf{Davar Khoshnevisan}\\
\noindent 155 South 1400 East, University of Utah,
	Department of Mathematics, 
	Salt Lake City, UT 84112-0090 (\texttt{joseph@math.utah.edu} \&\
	\texttt{davar@math.utah.edu})
\end{small}

\end{document}